\newdimen\LineSpace
\tikzset{
    line space/.code={\LineSpace=#1},
    line space=8.5pt
}
\newtheorem{theorem}{Theorem}[section]
\newtheorem{corollary}[theorem]{Corollary}
\newtheorem{lemma}[theorem]{Lemma}
\newtheorem{proposition}[theorem]{Proposition}
\newtheorem{definition-proposition}[theorem]{Definition-Proposition}
\theoremstyle{definition}
\newtheorem{definition}[theorem]{Definition}
\newtheorem{example}[theorem]{Example}
\newtheorem{observation}[theorem]{Observation}
\newtheorem{settings}[theorem]{Settings}
\theoremstyle{remark}
\newtheorem{remark}[theorem]{Remark}
\newcommand{\calA}{\mathcal{A}}
\newcommand{\calC}{\mathcal{C}}
\newcommand{\calU}{\mathcal{U}}
\newcommand{\calS}{\mathcal{S}}
\newcommand{\Sf}{\mathfrak{S}}
\newcommand{\fkm}{\mathfrak{m}}
\newcommand{\NN}{\mathbb{N}}
\newcommand{\ZZ}{\mathbb{Z}}
\newcommand{\RR}{\mathbb{R}}
\newcommand{\TT}{\mathbb{T}}
\newcommand{\kk}{\Bbbk}
\newcommand{\bfa}{\mathbf{a}}
\newcommand{\bfc}{\mathbf{c}}
\newcommand{\bfx}{\mathbf{x}}
\newcommand{\sfM}{\mathsf{M}}
\newcommand{\sfN}{\mathsf{N}}
\newcommand{\rank}{\operatorname{rank}}
\newcommand{\Hom}{\operatorname{Hom}}
\newcommand{\Cl}{\operatorname{Cl}}
\newcommand{\des}{\operatorname{des}}
\newcommand{\add}{\mathsf{add}}
\begin{document}

\title[Generalized $F$-signatures of Hibi rings]{Generalized $F$-signatures of Hibi rings}
\author[A Higashitani \and Y. Nakajima]{Akihiro Higashitani \and Yusuke Nakajima} 

\address[A Higashitani]{ Department of Pure and Applied Mathematics, Graduate School of Information Science and Technology, Osaka University, Suita, Osaka 565-0871, Japan}
\email{higashitani@ist.osaka-u.ac.jp}

\address[Y. Nakajima]{Department of Mathematics, Kyoto Sangyo University, Motoyama, Kamigamo, Kita-Ku, Kyoto, Japan, 603-8555}
\email{ynakaji@cc.kyoto-su.ac.jp}


\subjclass[2010]{Primary 13A35, 05E40; Secondary 06A11, 14M25} 
\keywords{Generalized $F$-signatures, Conic divisorial ideals, Hibi rings, Segre products of polynomial rings} 

\maketitle

\begin{abstract} 
The $F$-signature is a numerical invariant defined by the number of free direct summands in the Frobenius push-forward, 
and it measures singularities in positive characteristic. 
It can be generalized by focussing on the number of non-free direct summands. 
In this paper, we provide several methods to compute the (generalized) $F$-signature of a Hibi ring 
which is a special class of toric rings. 
In particular, we show that it can be computed by counting the elements in the symmetric group satisfying certain conditions. 
As an application, we also give the formula of the (generalized) $F$-signature for some Segre products of polynomial rings. 

\end{abstract}

\setcounter{tocdepth}{1}
\tableofcontents

\section{Introduction} 
\label{sec_intro}

In this paper, we study the (\emph{generalized}) \emph{$F$-signature}, 
which is a numerical invariant defined for a commutative algebra in positive characteristic. 
The $F$-signature was introduced in \cite{HL} inspired by the work \cite{SmVdB}, and it measures singularities as we will see in Theorem~\ref{prop_Fsig}. 
The $F$-signature has been computed for many classes of commutative algebras, but its value is still mysterious. 
The present paper focuses on the (generalized) $F$-signature for a special class of toric rings, called \emph{Hibi rings}. 

\subsection{Backgrounds}
Let $(A,\fkm)$ be a Noetherian local ring of prime characteristic $p>0$. 
We assume that $\kk\coloneqq A/\fkm$ is an algebraically closed field throughout this paper. 
In this case, we can define the Frobenius morphism $F:A\rightarrow A \ (a\mapsto a^p)$. 
For $e\in\NN$, we also define the $e$-times iterated Frobenius morphism $F^e:A\rightarrow A \ (a\mapsto a^{p^e})$. 
For any $A$-module $M$, we define ${}^eM$, which is the $A$-module given by restriction of scalars under $F^e$. 
That is, ${}^eM$ is just $M$ as an abelian group, and for $m\in M$ if we denote the corresponding element of ${}^eM$ by ${}^em$ then 
its $A$-module structure is defined by $a({}^em)\coloneqq {}^e(F^e(a)m)={}^e(a^{p^e}m)$ for $a\in A$. 
Sometimes, ${}^eM$ is denoted by $F^e_*M$, and called the \emph{Frobenius push-forward} of $M$. 
We say that $A$ is \emph{$F$-finite} if ${}^1A$ (equivalently every ${}^eA$) is a finitely generated $A$-module. 
In positive characteristic commutative algebra, we investigate the properties of $A$ through the structure of ${}^eA$ (or ${}^eM$). 
For example, it is known that an $F$-finite local ring $A$ is regular if and only if ${}^eA$ is a free $A$-module \cite{Kun}. 
Furthermore, various classes of rings in positive characteristic are defined via ${}^eA$. 

In the following, we assume that $A$ is an $F$-finite domain for simplicity. 
Since $A$ is reduced in particular, we can identify ${}^eA$ with $A^{1/p^e}$, 
which is the $A$-algebra consisting of $p^e$-th roots of $A$ inside an algebraic closure of its fraction field, 
by associating ${}^ea$ with $a^{1/p^e}$ for any $a\in A$. 
With this viewpoint, the $e$-times iterated Frobenius morphism $F^e$ is identified with the inclusion $A\hookrightarrow A^{1/p^e}$. 

\begin{remark}
\label{KS condition}
In this paper, we focus on an indecomposable decomposition of $A^{1/p^e}$. 
Since $\widehat{A}^{1/p^e}\cong \widehat{A}\otimes_AA^{1/p^e}$ holds where $\widehat{A}$ is the $\fkm$-adic completion of $A$, 
we can reduce our arguments to $\widehat{A}$-modules. 
Since $\widehat{A}$ satisfies the Krull-Schmidt condition, we will assume that $A$ satisfies this condition, 
and hence an indecomposable decomposition of $A^{1/p^e}$ is unique up to isomorphism. 
\end{remark}

For understanding the structure of $A^{1/p^e}$, we also introduce the notion of finite $F$-representation type.

\begin{definition} (see \cite{SmVdB,Yao})
\label{def_FFRT}
We say that $A$ has \emph{finite $F$-representation type} (= \emph{FFRT}) by $\calS$ if there is a finite set 
$\calS\coloneqq\{M_0, M_1, \cdots, M_n\}$ of isomorphism classes of indecomposable finitely generated $A$-modules such that 
for any $e\in\NN$, the $A$-module $A^{1/p^e}$ is isomorphic to a finite direct sum of these modules: 
\begin{equation}
\label{A_decomp}
A^{1/p^e}\cong M_0^{\oplus c_{0,e}}\oplus M_1^{\oplus c_{1,e}}\oplus\cdots\oplus M_n^{\oplus c_{n,e}}
\end{equation}
for some $c_{i,e}\ge 0$. 
Moreover, we say that a finite set $\calS=\{M_0, \cdots, M_n\}$ is the \emph{FFRT system} of $A$ if every $A$-module $M_i$ appears non-trivially in 
$A^{1/p^e}$ as a direct summand for some $e\in\NN$. 
\end{definition}

Let $A$ be a ring having FFRT by the FFRT system $\calS=\{M_0, M_1, \cdots, M_n\}$. 
In order to detect the asymptotic behavior of the decomposition (\ref{A_decomp}), we consider the limit 
\begin{equation}
\label{def_genFsig}
s(M_i, A)\coloneqq\lim_{e\rightarrow\infty}\frac{c_{i,e}}{p^{ed}} \quad ( i, j=0,1,\cdots,n ). 
\end{equation}
This numerical invariant was derived from the study in \cite{SmVdB}, and 
the existence of the limit is guaranteed under the assumption $A$ has FFRT (see \cite[Theorem~3.11]{Yao}). 
Furthermore, $s(M_i,A)$ is a positive rational number if $A$ has FFRT and is strongly $F$-regular (see \cite[Proposition~3.3.1]{SmVdB}). 
Here, we say that $A$ is \emph{strongly $F$-regular} if for every $0\neq c\in A$ there exists $e\in\NN$ such that 
$$
A\hookrightarrow A^{1/p^e} \overset{\cdot c^{1/p^e}}{\longrightarrow}A^{1/p^e} \quad(a\mapsto c^{1/p^e}a)
$$
splits as an $A$-linear morphism. 
We remark that if $M_i=A$ for some $i$ (e.g., the case where $A$ is strongly $F$-regular), 
then $s(A, A)$ is nothing else but the \emph{$F$-signature} of $A$ introduced in \cite{HL}, and denoted by $s(A)$. 
The $F$-signature can be defined in more general settings (see Theorem~\ref{prop_Fsig} below). 
Also, it had been revealed that the $F$-signature coincides with the minimal relative Hilbert-Kunz multiplicity introduced in \cite{WY}. 
We collect basic properties on the $F$-signature $s(A)=s(A, A)$ here. 

\begin{theorem}
\label{prop_Fsig}
Let $A$ be a $d$-dimensional $F$-finite (complete) local domain of prime characteristic $p>0$. 
We suppose that $A^{1/p^e}\cong A^{\oplus a_e}\oplus M_e$ where $M_e$ contains no free direct summand. 
Then, the $F$-signature $s(A)\coloneqq\displaystyle\lim_{e\rightarrow\infty}\frac{a_e}{p^{ed}}$ exists $($see \cite{Tuc}$)$ and satisfies the following properties. 
\begin{enumerate}[\rm (1)]
\item $0\le s(A)\le 1$. 
\item $A$ is regular if and only if $s(A)=1$ $($see \cite{HL,WY}$)$. 
\item $A$ is strongly $F$-regular if and only if $s(A)>0$ $($see \cite{AL}$)$. 
\end{enumerate}
\end{theorem}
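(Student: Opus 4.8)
The plan is to derive all three properties from the free-rank sequence $a_e$, establishing convergence first and then each equivalence in turn. I would first address existence of the limit, following \cite{Tuc}. The key is to encode the free rank as a colength: setting $I_e=\{x\in A: \phi({}^ex)\in\fkm \text{ for all } \phi\in\Hom_A({}^eA,A)\}$, the $e$-th \emph{splitting ideal}, one shows that $a_e$ equals $\ell_A(A/I_e)$. Convergence of $a_e/p^{ed}$ then follows from a near-subadditivity estimate for the $I_e$, obtained from the inclusions relating $I_{e+e'}$ to $I_e$ and to the ideal generated by ${}^eI_{e'}$; this Cauchy argument is the technical heart of existence, and I expect it to demand the most care among the purely structural steps. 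The bounds are then immediate: since $\kk$ is algebraically closed, hence perfect, $A^{1/p^e}$ has rank $p^{ed}$ over $A$, so comparing ranks in $A^{1/p^e}\cong A^{\oplus a_e}\oplus M_e$ gives $0\le a_e\le p^{ed}$, and dividing by $p^{ed}$ and passing to the limit yields $0\le s(A)\le 1$.

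For property (2), one direction is Kunz's theorem \cite{Kun}: if $A$ is regular then ${}^eA$ is free, so $M_e=0$ and $a_e=p^{ed}$, whence $s(A)=1$. For the converse I would argue through Hilbert--Kunz theory following \cite{HL,WY}. The idea is to bound $a_e$ away from the full rank whenever $A$ is singular, producing for each $e$ a submodule witnessing non-freeness whose colength contributes a fixed positive proportion, so that $s(A)\le 1-\delta$ for some $\delta>0$; equivalently, one invokes that $s(A)=1$ forces the Hilbert--Kunz multiplicity to equal $1$, which by the Watanabe--Yoshida characterization holds only in the regular case.

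Finally, for property (3), the implication ``strongly $F$-regular $\Rightarrow s(A)>0$'' is the easier direction: the splitting in the definition, applied with a suitable nonzero $c$, produces a free summand of ${}^eA$ for all large $e$, and in fact a uniform lower bound $a_e\ge \gamma\,p^{ed}$ with $\gamma>0$, giving $s(A)>0$. The reverse implication is the deep input of Aberbach--Leuschke \cite{AL}: from $s(A)>0$ one must upgrade the mere existence of free summands to the full splitting condition for \emph{every} nonzero $c$. The strategy is to use positivity of the $F$-signature to guarantee that a fixed positive density of the splittings of ${}^eA$ survive after multiplication by $c^{1/p^e}$, for $e$ large depending on $c$. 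This uniform-density argument, rather than either characterization above, is the main obstacle of the whole statement, and it is where the full strength of the cited results is genuinely needed.
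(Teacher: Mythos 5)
The paper offers no proof of this theorem at all: it is stated as background, with existence deferred to \cite{Tuc}, part (2) to \cite{Kun,HL,WY}, and part (3) to \cite{AL}, so the comparison is really against those cited arguments. Measured that way, your sketches of existence and of parts (1) and (2) are faithful summaries: the splitting-ideal identity $a_e=\ell_A(A/I_e)$ is valid here because the residue field is algebraically closed (hence perfect), the rank count $\rank_A A^{1/p^e}=p^{ed}$ gives $0\le s(A)\le 1$, and the route ``$s(A)=1\Rightarrow e_{HK}(A)=1\Rightarrow A$ regular'' is exactly the Huneke--Leuschke/Watanabe--Yoshida line of reasoning.

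Your part (3), however, contains a genuine error: you have the two implications exactly inverted. The direction you label ``easier,'' namely strongly $F$-regular $\Rightarrow s(A)>0$, is precisely the main theorem of Aberbach--Leuschke, and your claim that the splitting in the definition produces ``in fact a uniform lower bound $a_e\ge\gamma\,p^{ed}$'' is unjustified: the definition hands you, for each nonzero $c$, a single splitting at a single $e$, i.e.\ $a_e\ge 1$, and promoting such isolated splittings to a positive \emph{proportion} of $p^{ed}$ is the hard content of \cite{AL}, requiring genuine uniform estimates rather than being a formal consequence of the definition. Conversely, the direction you call ``the main obstacle of the whole statement,'' namely $s(A)>0\Rightarrow$ strongly $F$-regular, needs no uniform-density argument and is the routine one: if some nonzero $c$ lay in every splitting ideal $I_e$, then $(c)+\fkm^{[p^e]}\subseteq I_e$ (note $\fkm^{[p^e]}\subseteq I_e$ always), so
\[
a_e=\ell_A(A/I_e)\le \ell_A\bigl(A/((c)+\fkm^{[p^e]})\bigr)=O\bigl(p^{e(d-1)}\bigr),
\]
since $A$ is a domain, $\dim A/(c)\le d-1$, and Hilbert--Kunz colengths grow at most like $p^{e\dim}$; this forces $s(A)=0$, a contradiction. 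Hence $s(A)>0$ yields, for every $c\neq 0$, some $e$ with $c\notin I_e$, which is exactly the splitting demanded by strong $F$-regularity. Had you executed your plan, the step you budgeted as easy is where the entire difficulty of \cite{AL} lives, while the step you budgeted as deep collapses to the few lines above.
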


Therefore, the $F$-signature measures the deviation from regularity, 
and we expect that a strongly $F$-regular ring having large $F$-signature would have better singularities. 
With these backgrounds, we call $s(M_i, A)$ given in (\ref{def_genFsig}) the \emph{generalized $F$-signature} of $M_i$ (with respect to $A$). 
This numerical invariant has been investigated in several literature e.g., \cite{Bru,HaN,HS}. 

\begin{remark}
\label{decomp_M}
We can also consider the $A$-module $M_j^{1/p^e}$ and its asymptotic behavior in a similar way. 
Then, we also define the \emph{generalized $F$-signature} of $M_i$ with respect to $M_j$ which is denoted by $s(M_i,M_j)$. 
It is known that this satisfies $s(M_i,M_j)=(\rank_AM_j)\cdot s(M_i,A)$ if $A$ is strongly $F$-regular and has FFRT (see \cite[Proof of Proposition~3.3.1.]
{SmVdB}). 
\end{remark}

\subsection{Summary of our results}
In this paper, we study the generalized $F$-signatures for a Hibi ring, which is a special class of toric rings defined by a partially ordered set (= poset). 
First, it is known that any toric ring is strongly $F$-regular and has FFRT, 
and hence for each module in the FFRT system the generalized $F$-signature exists and it is strictly positive. 
Moreover, the FFRT system of a toric ring can be characterized by a special class of divisorial ideals called \emph{conic} (see Definition~\ref{def_conic}). 
Thus, the generalized $F$-signature is defined for each conic divisorial ideal. 
If $R$ is a Hibi ring, then conic divisorial ideals are characterized in terms of the associated poset (see Theorem~\ref{conic_Hibi}). 
Furthermore, we can see that each conic divisorial ideal of a $d$-dimensional Hibi ring corresponds to a certain full-dimensional cell in $(-1,0]^d$ 
(see Observation~\ref{obs_fulldim_cell}). 
It is known that the generalized $F$-signature of a conic divisorial ideal can be obtained as the volume of the corresponding cell 
(see Corollary~\ref{Thm_Fsig_volume_Hibi}). 
The point which we want to discuss in this paper is that when we consider a $d$-dimensional Hibi ring, 
the generalized $F$-signature can also be obtained as the number of certain elements in the symmetric group $\mathfrak{S}_d$. 
We will explain this for the case of the Segre product of polynomial rings, which is an important example of Hibi rings (see Theorem~\ref{S_n}). 
The crucial point is that the full-dimensional cell corresponding to a conic divisorial ideal is considered as an \emph{alcoved polytope}. 
This polytope was introduced by Lam and Postnikov \cite{LP}, 
and they showed that the volume of this polytope can be given in terms of a symmetric group. 
Thus, we have the followings. 

\begin{theorem}[{see Theorem~\ref{S_n}}] 
Let $R$ be the Segre product of polynomial rings with $\dim R=d$, and let $M$ be a conic divisorial ideal of $R$. 
Then, the generalized $F$-signature $s(M,R)$ is equal to $\Gamma_M(\mathfrak{S}_d)/d!$, 
where $\Gamma_M(\mathfrak{S}_d)$ is the number of elements in $\mathfrak{S}_d$ satisfying certain conditions that depend on $M$. 
\end{theorem}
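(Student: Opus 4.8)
The plan is to convert the analytic invariant $s(M,R)$ into a purely combinatorial count in two stages: first realize it as a lattice volume, then evaluate that volume by the Lam--Postnikov alcove decomposition.

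First I would invoke Corollary~\ref{Thm_Fsig_volume_Hibi}, which identifies $s(M,R)$ with the Euclidean volume $\vol(\calC_M)$ of the full-dimensional cell $\calC_M\subset(-1,0]^d$ attached to the conic divisorial ideal $M$ in Observation~\ref{obs_fulldim_cell}. Since $\dim R=d$, the quantity $d!\cdot\vol(\calC_M)$ is the normalized (lattice) volume of $\calC_M$, so the theorem is equivalent to the identity $d!\cdot\vol(\calC_M)=\Gamma_M(\Sf_d)$. This reduces everything to a volume computation.

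Next I would make the alcoved structure of $\calC_M$ explicit. Because $R$ is the Segre product of polynomial rings, its associated poset is a product of two chains, and the inequalities cutting out $\calC_M$---obtained from the description of conic ideals in Theorem~\ref{conic_Hibi}---can all be rewritten in the form $k_{ij}\le x_i-x_j\le k_{ij}'$ with integers $k_{ij},k_{ij}'$ determined by $M$. This is precisely the defining shape of an alcoved polytope in the sense of Lam and Postnikov \cite{LP}: each facet lies on a hyperplane of the affine type-$A$ arrangement $\{x_i-x_j\in\ZZ\}$. Consequently $\calC_M$ is tiled by unit alcoves, each of Euclidean volume $1/d!$, so that $d!\cdot\vol(\calC_M)$ simply counts these alcoves.

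Finally I would apply the Lam--Postnikov theorem \cite{LP}, which indexes the alcoves inside such a polytope by permutations: an alcove sits in $\calC_M$ exactly when the descent data of the corresponding $w\in\Sf_d$, together with the wrap-around conditions imposed by the integer bounds $k_{ij},k_{ij}'$, are compatible with the facets of $\calC_M$. Declaring $\Gamma_M(\Sf_d)$ to be the number of such compatible permutations gives $d!\cdot\vol(\calC_M)=\Gamma_M(\Sf_d)$, and dividing by $d!$ yields $s(M,R)=\Gamma_M(\Sf_d)/d!$ as claimed. The main obstacle is the middle step: verifying that \emph{every} facet of $\calC_M$ for the Segre product lies on a hyperplane $x_i-x_j=(\text{integer})$, and then transporting the Lam--Postnikov permutation model into the explicit numerical conditions defining $\Gamma_M$. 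This requires careful bookkeeping, since one must track how the coordinates of $M$ on the product-of-chains poset fix the bounds $k_{ij},k_{ij}'$, and then match the (cyclic) descent statistics of a permutation $w$ against those bounds. By contrast, the volume reduction via Corollary~\ref{Thm_Fsig_volume_Hibi} and the final appeal to \cite{LP} are comparatively formal.
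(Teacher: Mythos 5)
Your overall two-stage strategy (reduce $s(M,R)$ to the volume of the cell via Corollary~\ref{Thm_Fsig_volume_Hibi}, then evaluate that volume by Lam--Postnikov) is exactly the paper's, and the first stage is carried out correctly. The gap is in your final step. Proposition~\ref{key_prop} does \emph{not} index the alcoves of an arbitrary alcoved polytope by permutations: it applies only to alcoved polytopes contained in the region $\{0 \leq z_i - z_{i-1} \leq 1,\ i=1,\dots,d\}$, i.e.\ in the union of the hypersimplices $\tilde{\Delta}_{k,d+1}$, which is a fundamental domain for the lattice translations; alcoves of $\RR^d$ at large are indexed by the \emph{affine} symmetric group, so ``the permutation corresponding to an alcove'' is simply undefined outside that region. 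After the telescoping unimodular substitution that puts the cell $F_\bfc$ into alcoved form (the paper's \eqref{eq_Fsig1}, which you anticipate but never write down), the consecutive differences are indeed confined to $[0,1]$, but the differences linking the chains of the poset satisfy only $-r_i \leq X_{i,1}-X_{i+1,r_{i+1}} \leq 1$, an interval of length $r_i+1>1$. Hence the transformed cell is \emph{not} contained in the fundamental region, and a direct appeal to Proposition~\ref{key_prop} --- your ``wrap-around conditions'' remark notwithstanding --- does not go through.

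The missing idea, which is the actual core of the paper's proof of Theorem~\ref{S_n}, is a decompose--translate--sum argument: slice the transformed cell into the slabs $-\alpha_i \leq X_{i,1}-X_{i+1,r_{i+1}} \leq -\alpha_i+1$ indexed by integer vectors $(\alpha_1,\dots,\alpha_{t-1}) \in \calA=\prod_i\{0,1,\dots,r_i\}$, translate each slab by the corresponding integer vector (the paper's \eqref{eq_Fsig_translation}) so that it lands inside the fundamental region, apply Proposition~\ref{key_prop} to each translated piece separately, and add up the resulting permutation counts. This is why the paper's formula reads $s(M_\bfc)=\frac{1}{d!}\sum_{\alpha}|U(\alpha_1,\dots,\alpha_{t-1})|$; it can be packaged as a single count $\Gamma_M(\Sf_d)$ only because condition (i) of Theorem~\ref{S_n} determines $\alpha$ uniquely from $w$, so the sets $U(\alpha_1,\dots,\alpha_{t-1})$ are pairwise disjoint. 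Note that the special case $t=2$ (Proposition~\ref{compute_Fsig2}) avoids the decomposition only by using a different, ad hoc unimodular map that sends the cell onto a single hypersimplex; no such shortcut is available in general. Two smaller points: the poset of the Segre product $\#_{i=1}^t\kk[x_{i,1},\dots,x_{i,r_i+1}]$ is a disjoint union of $t$ chains (Figure~\ref{segre_poset}), not a product of two chains; and the rewriting of the defining inequalities of $F_\bfc$ (which involve sums of many coordinates) into difference form is not mere bookkeeping but requires exhibiting the explicit unimodular change of variables, as the paper does.
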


By using this description, we can obtain 
the formulas of the generalized $F$-signatures for some classes of the Segre product of polynomial rings 
(see Proposition~\ref{compute_Fsig1} and \ref{compute_Fsig2}). 
Although, we mainly discuss the case of the Segre product of polynomial rings, our arguments are also valid for any Hibi ring 
(see Section~\ref{sec_Fsig_remark}).

\subsection{Plan of the paper} 

The content of this paper is the following. In Section~\ref{sec_pre_Fsig}, 
we prepare some results concerning the generalized $F$-signatures in a general setting. 
Then, in Section~\ref{sec_conic}, we prepare notations concerning toric rings, and review some known results. 
In Section~\ref{sec_Fsig_Hibi}, we introduce a Hibi ring. 
Then, we give the precise description of conic divisorial ideals of a Hibi ring using the associated poset, 
and explain that the generalized $F$-signature can be obtained as the volume of a certain polytope. 
In Section~\ref{sec_Fsig_Segre}, we give another description of the generalized $F$-signature in terms of 
a symmetric group for the Segre product of polynomial rings. 
Using such a description, in Section~\ref{sec_Fsig_formula}, we give some formulas of the generalized $F$-signatures. 
In Section~\ref{sec_Fsig_remark}, we remark that the arguments given in Section~\ref{sec_Fsig_Segre} are also valid for 
any Hibi ring, and give an example.

\section{Some properties on generalized $F$-signatures}
\label{sec_pre_Fsig}

In this section, we show some properties of the generalized $F$-signatures used in this paper. 
Here, we note that for a commutative ring $A$ and an $A$-module $M$, 
$\add_A(M)$ stands for the category consisting of direct summands of finite direct sums of copies of $M$. 

\begin{proposition}
\label{prop_genFsig}
Suppose that $A$ is a strongly $F$-regular local ring. Let $\omega_A$ be the canonical module of $A$, and let $(-)^\vee=\Hom_A(-,\omega_A)$ be the canonical dual. 
\begin{enumerate}[{\rm(a)}]
\setlength{\parskip}{0pt} 
\setlength{\itemsep}{0pt} 
\item We have that $\omega_A\in\add_A(A^{1/p^e})$ for sufficiently large $e\gg0$. 
\item For a finitely generated $A$-module $M$, we see that $M\in\add_A(A^{1/p^e})$ if and only if $M^\vee\in\add_A(A^{1/p^f})$ 
for some $e, f\in\NN$. 
\item Furthermore, we assume that $A$ has FFRT by the FFRT system $\{M_0, M_1, \cdots, M_n\}$. 
Then, we see that $s(M_i,A)=s(M_i^\vee,A)$ for any $i=0, 1, \cdots, n$. 
\end{enumerate}
\end{proposition}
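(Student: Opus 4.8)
The plan is to prove the three parts in order, using the strong $F$-regularity and the standard duality theory for the canonical module.

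For part (a), I would use the fact that a strongly $F$-regular ring is Cohen-Macaulay and normal, so the canonical module $\omega_A$ is a reflexive (rank-one) module, and in the local case it is a direct summand of $A^{1/p^e}$ precisely when it is a conic/appears in the Frobenius decomposition. The key input is that strong $F$-regularity implies, for each $0\neq c\in A$, that the map $A\to A^{1/p^e}$ sending $1\mapsto c^{1/p^e}$ splits for large $e$. More usefully, a characterization of strong $F$-regularity says that every rank-one reflexive module (equivalently every divisorial ideal) appears as a direct summand of some $A^{1/p^e}$ — this is essentially the statement that the FFRT system of a strongly $F$-regular normal domain consists of (representatives of) conic divisorial ideals, and $\omega_A$ is one such divisorial ideal. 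So I would argue that $\omega_A$, being a divisorial ideal, lies in $\add_A(A^{1/p^e})$ for $e\gg 0$. The cleanest route is to invoke the splitting criterion for strong $F$-regularity applied to a suitable element cutting out the divisor associated to $\omega_A$.

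For part (b), the idea is that canonical duality $(-)^\vee=\Hom_A(-,\omega_A)$ interacts well with Frobenius push-forward. Concretely, I would use that ${}^e(\omega_A)\cong\omega_{A}$-type identifications: since $A$ is Cohen-Macaulay and $F$-finite, $\omega_{{}^eA}\cong {}^e(\omega_A)$ as ${}^eA$-modules, and the canonical module of $A^{1/p^e}$ viewed as an $A$-module relates the dual of $A^{1/p^e}$ back to $A^{1/p^e}$ twisted by $\omega_A$. The strategy is: if $M\in\add_A(A^{1/p^e})$, then applying $(-)^\vee$ and using $(A^{1/p^e})^\vee\in\add_A(A^{1/p^f})$ for some $f$ (which follows from part (a) together with the behavior of duals of $A^{1/p^e}$) yields $M^\vee\in\add_A(A^{1/p^f})$. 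The reverse implication follows by symmetry once one knows $(-)^{\vee\vee}\cong\mathrm{id}$ on the relevant reflexive modules, which holds since $A$ is normal and these modules are reflexive. The main technical point is establishing that $(A^{1/p^e})^\vee$ itself lies in $\add_A(A^{1/p^f})$; I expect to deduce this from part (a) by noting $(A^{1/p^e})^\vee\cong {}^e(A^\vee)={}^e(\omega_A)^{?}$ and absorbing the canonical twist using $\omega_A\in\add_A(A^{1/p^{e'}})$.

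For part (c), once parts (a) and (b) are in place, I would compare the asymptotic multiplicities. The point is that the canonical dual $(-)^\vee$ is an exact, rank-preserving, additive duality permuting the FFRT system $\{M_0,\dots,M_n\}$; it sends $M_i$ to $M_i^\vee$, which is again indecomposable and again in the FFRT system. Dualizing the decomposition $A^{1/p^e}\cong\bigoplus_i M_i^{\oplus c_{i,e}}$ and using that $(A^{1/p^e})^\vee$ has the same asymptotic free-rank behavior (because $\dim$ and the normalized counting by $p^{ed}$ are preserved under the duality, which multiplies ranks by $\rank\omega_A=1$) gives a matching of the multiplicities $c_{i,e}$ of $M_i$ with those of $M_i^\vee$ in the dual decomposition. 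Passing to the limit in (\ref{def_genFsig}) then yields $s(M_i,A)=s(M_i^\vee,A)$.

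The hard part will be part (b): controlling how the canonical dual moves a module out of $\add_A(A^{1/p^e})$ and back into $\add_A(A^{1/p^f})$ for a possibly different $f$, and in particular pinning down the isomorphism type of $(A^{1/p^e})^\vee$ as an $A$-module. The subtlety is that $(-)^\vee$ does not commute with Frobenius push-forward on the nose but only up to the canonical twist, so I expect to spend the most effort reconciling the $\omega_A$-twist using part (a), namely absorbing $\omega_A\in\add_A(A^{1/p^{e'}})$ to convert statements about $A^\vee$-coefficients into statements about free summands. Once that compatibility is clean, parts (a) and (c) should follow quickly.
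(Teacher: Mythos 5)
Your parts (b) and (c) are essentially the paper's own argument: the identity $(A^{1/p^e})^\vee\cong\omega_A^{1/p^e}$, absorbing the resulting $\omega_A$-twist via part (a) to get $M^\vee\in\add_A(\omega_A^{1/p^e})\subset\add_A(A^{1/p^{e+e'}})$, and then matching multiplicities under the duality and using $s(M_i^\vee,\omega_A)=(\rank_A\omega_A)\,s(M_i^\vee,A)=s(M_i^\vee,A)$ as in Remark~\ref{decomp_M}. So everything hinges on how you establish part (a).

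There your primary justification rests on a false claim: it is \emph{not} true that every divisorial ideal (rank-one reflexive module) of a strongly $F$-regular normal domain appears as a direct summand of some $A^{1/p^e}$, and this is not what ``the FFRT system consists of conic divisorial ideals'' asserts. In the toric case only the \emph{conic} classes occur as summands (Theorem~\ref{motivation_thm}), and these form a finite subset of a typically infinite class group: in Example~\ref{ex_typeN} one has $\Cl(\kk[P])\cong\ZZ^2$ but only nine conic classes, so almost all divisorial ideals never appear in any $\kk[P]^{1/p^e}$. Moreover, Proposition~\ref{prop_genFsig} is stated for an arbitrary strongly $F$-regular local ring, before any toric or FFRT structure is available (parts (a) and (b) do not assume FFRT), so an argument through conic classes is out of place regardless. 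The paper instead proves (a) by a counting argument: letting $c_e$ be the free rank of $\omega_A^{1/p^e}$, Tucker's theorem \cite[Theorem~4.11]{Tuc} gives $\lim_{e\to\infty}c_e/p^{ed}=(\rank_A\omega_A)\,s(A)=s(A)>0$ by strong $F$-regularity, so $A$ is a direct summand of $\omega_A^{1/p^e}$ for $e\gg0$, and dualizing with $(\omega_A^{1/p^e})^\vee\cong A^{1/p^e}$ yields $\omega_A\in\add_A(A^{1/p^e})$. Your fallback suggestion (the splitting criterion applied to an element $c$ of an ideal $I\cong\omega_A$) can be made to work and even avoids Tucker's theorem: a splitting $\phi\colon A^{1/p^e}\to A$ with $\phi(c^{1/p^e})=1$ restricts to show that $A\to I^{1/p^e}=\omega_A^{1/p^e}$, $1\mapsto c^{1/p^e}$, splits, so $A$ is a summand of $\omega_A^{1/p^e}$. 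But note this gives a \emph{free} summand of $\omega_A^{1/p^e}$, not directly an $\omega_A$-summand of $A^{1/p^e}$; the dualization step via $(\omega_A^{1/p^e})^\vee\cong A^{1/p^e}$ is unavoidable in either route and must be stated, whereas your sketch leaves it implicit.
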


\begin{proof}
(a) If $A$ is Gorenstein, the assertion is trivial. Thus, we assume that $A\not\cong \omega_A$. 
Let $c_e$ be the maximal rank of a free $A$-module appearing in the decomposition of $\omega_A^{1/p^e}$. 
Then, by Theorem~\cite[Theorem~4.11]{Tuc}, we see that $\displaystyle\lim_{e\rightarrow\infty}\frac{c_e}{p^{ed}}=(\rank_A\omega_A)s(A)=s(A)$. 
Since $A$ is strongly $F$-regular, we have that $s(A)>0$, and hence $c_e>0$ for sufficiently large $e\gg0$ in particular. 
Therefore, $A$ appears in $\omega_A^{1/p^e}$ as the direct summand. 
Taking the canonical dual $(-)^\vee$, we have that 
\[
A^{1/p^e}\cong(\omega_A^{1/p^e})^\vee\cong(A^{\oplus c_e}\oplus N_e)^\vee
\cong\omega_A^{\oplus c_e}\oplus N_e^\vee, 
\] 
where $N_e$ has no free direct summands. Thus, we have the conclusion. 

(b) 
First, we assume that $M\in\add_A(A^{1/p^e})$. By applying the canonical dual, we have $M^\vee\in\add_A(\omega_A^{1/p^e})$. 
By (a), we have that $\omega_A\in\add_A(A^{1/p^{e^\prime}})$ for some $e^\prime\in\NN$. 
Therefore, we have $M^\vee\in\add_A(\omega_A^{1/p^e})\subset\add_A(A^{1/p^{e+e^\prime}})$. The converse also holds by the duality.

(c) 
By (b), we see that $M_i, M_i^\vee\in\add_A(A^{1/p^e})$ for sufficiently large $e\gg0$. 
Let $c_e(M_i)$ (resp. $c_e(M_i^\vee)$) be the maximal rank of $M_i$ (resp. $M_i^\vee$) appearing in the decomposition of $A^{1/p^e}$. 
Similarly, let $d_e(M_i)$ (resp. $d_e(M_i^\vee)$) be the maximal rank of $M_i$ (resp. $M_i^\vee$) appearing in the decomposition of $\omega_A^{1/p^e}$.
Since $A^{1/p^e}\cong(\omega_A^{1/p^e})^\vee$, we see that $c_e(M_i)=d_e(M_i^\vee)$ and $c_e(M_i^\vee)=d_e(M_i)$ by the duality of $(-)^\vee$. 
Since $A$ has FFRT, the generalized $F$-signatures of $M_i$ and $M_i^\vee$ exist, 
and we have that 
$$
s(M_i,A)=\lim_{e\rightarrow\infty}\frac{c_e(M_i)}{p^{ed}}=\lim_{e\rightarrow\infty}\frac{d_e(M_i^\vee)}{p^{ed}}=s(M_i^\vee,\omega_A)
=(\rank_A\omega_A)s(M_i^\vee,A)=s(M_i^\vee,A). 
$$ 
Here, the fourth equal sign follows from Remark~\ref{decomp_M}.
\end{proof}

\begin{remark}
We remark that in the proof of Proposition~\ref{prop_genFsig} (c), 
$c_e(M_i)\neq c_e(M_i^\vee)$ in general. In fact, let $A$ be the Veronese subring $\kk[[x,y,z]]^{(2)}$. 
It is known that $A$ is a $3$-dimensional strongly $F$-regular ring that is not Gorenstein. 
Also, $A^{1/p^e}$ is decomposed as $A^{1/p^e}\cong A^{\oplus a_e}\oplus \omega_A^{\oplus b_e}$ 
where $a_e=\displaystyle\frac{p^{3e}+1}{2}$ and $b_e=\displaystyle\frac{p^{3e}-1}{2}$ (see \cite[Example~5.2]{Sei}). 
Thus, we have that $\displaystyle\lim_{e\rightarrow\infty}\frac{a_e}{p^{3e}}=\lim_{e\rightarrow\infty}\frac{b_e}{p^{3e}}=\frac{1}{2}$, but $a_e\neq b_e$. 
\end{remark}

\section{Preliminaries on toric rings}
\label{sec_conic}

In the rest, we turn our attention to toric rings. 
Thus, let $\sfN\cong\ZZ^d$ be a lattice of rank $d$ and let $\sfM\coloneqq\Hom_\ZZ(\sfN, \ZZ)$ be the dual lattice of $\sfN$. 
We set $\sfN_\RR\coloneqq\sfN\otimes_\ZZ\RR$ and $\sfM_\RR\coloneqq\sfM\otimes_\ZZ\RR$. 
We denote an inner product by $\langle\;,\;\rangle:\sfM_\RR\times\sfN_\RR\rightarrow\RR$. 
Let 
$$
\tau\coloneqq\mathrm{Cone}(v_1, \cdots, v_n)=\RR_{\ge 0}v_1+\cdots +\RR_{\ge 0}v_n
\subset\sfN_\RR 
$$
be a strongly convex rational polyhedral cone of dimension $d$ generated by $v_1, \cdots, v_n\in\ZZ^d$ where $d\le n$. 
We assume that $v_1, \cdots, v_n$ are minimal primitive generators of $\tau$. 
For each generator $v_i$, we define a linear form $\sigma_i(-)\coloneqq\langle-, v_i\rangle$, 
and denote $\sigma(-)\coloneqq(\sigma_1(-),\cdots,\sigma_n(-))$. 
We consider the dual cone $\tau^\vee$: 
$$
\tau^\vee\coloneqq\{{\bf x}\in\sfM_\RR \mid \sigma_i({\bf x})\ge0 \text{ for all } i=1,\cdots,n \}. 
$$
Then, $\tau^\vee\cap\sfM$ is a positive normal affine monoid, and hence we define the toric ring 
$$
R\coloneqq \kk[\tau^\vee\cap\sfM]=\kk[t_1^{m_1}\cdots t_d^{m_d}\mid (m_1, \cdots, m_d)\in\tau^\vee\cap\sfM], 
$$
where $\kk$ is an algebraically closed field. 
It is known that $R$ is a $d$-dimensional Cohen-Macaulay (= CM) normal domain. 
Moreover, if ${\rm char}\,\kk=p>0$, then $R$ is strongly $F$-regular (see \cite[Theorem~(5.5)]{HH}). 
In addition, for each $\bfa=(a_1, \cdots, a_n)\in\RR^n$, we set 
$$
\TT(\bfa)\coloneqq\{{\bf x}\in\sfM \mid \text{$\sigma_i({\bf x})\ge a_i$ for all $i=1,\cdots,n$}\}. 
$$
Then, we define the $R$-module $T(\bfa)$ generated by all monomials whose exponent vector is in $\mathbb{T}(\bfa)$. 
By definition, we have $T(\bfa)=T(\ulcorner \bfa\urcorner)$, where $\ulcorner \; \urcorner$ means the round up 
and $\ulcorner \bfa\urcorner=(\ulcorner a_1\urcorner, \cdots, \ulcorner a_n\urcorner)$. 
This $T(\bfa)$ is a divisorial ideal (rank one reflexive module), and any divisorial ideal of $R$ takes this form (see e.g., \cite[Theorem~4.54]{BG2}), 
thus each divisorial ideal is represented by $\bfa\in\ZZ^n$. 
In what follows, we will pay attention to a special class of divisorial ideals called \emph{conic}. 

\begin{definition}[{see e.g., \cite[Section~3]{BG1}}]
\label{def_conic}
We say that a divisorial ideal $T(\bfa)$ is \emph{conic} if there exist $\bfx\in\sfM_\RR$ such that $\bfa=\ulcorner\sigma(\bfx)\urcorner$. 
We denote the set of isomorphism classes of conic divisorial ideals of $R$ by $\calC(R)$. 
\end{definition}

This class of divisorial ideals is so important when we consider the structure of $R^{1/p^e}$. 

\begin{theorem}[{\cite[Proposition~3.6]{BG1},\cite[Proposition~3.2.3]{SmVdB}}]
\label{motivation_thm}
Let $R$ be a toric ring as above, and assume that ${\rm char}\,\kk=p>0$. 
Then, for any $e\in\NN$, the $R$-module $R^{1/p^e}$ is a direct sum of conic divisorial ideals. 
Moreover, all conic divisorial ideals appear in $R^{1/p^e}$ as direct summands for $e\gg 0$. 
\end{theorem}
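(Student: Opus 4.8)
The plan is to realize $R^{1/p^e}$ explicitly as a semigroup ring and then decompose it by a finer grading. Identifying $R^{1/p^e}$ with the $\kk$-span of the monomials $t^v$ for $v\in\tau^\vee\cap\frac{1}{p^e}\sfM$ (regarded as an $R$-module via addition of exponents), I would equip it with the grading by the finite group $\frac{1}{p^e}\sfM/\sfM\cong(\ZZ/p^e\ZZ)^d$: for a class $[w]$ let $R^{1/p^e}_{[w]}$ be the span of the $t^v$ with $v\in(w+\sfM)\cap\tau^\vee$. Since $R$ is concentrated in the trivial class and multiplication by any $t^m$ with $m\in\sfM\cap\tau^\vee$ preserves each coset, every $R^{1/p^e}_{[w]}$ is an $R$-submodule and
$$
R^{1/p^e}=\bigoplus_{[w]\in\frac{1}{p^e}\sfM/\sfM}R^{1/p^e}_{[w]}
$$
as $R$-modules.

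Next I would identify each summand with a conic divisorial ideal. The exponent shift $t^v\mapsto t^{v-w}$ gives an $R$-linear isomorphism of $R^{1/p^e}_{[w]}$ onto the $R$-module spanned by the monomials with exponent $m=v-w\in\sfM$ satisfying $\sigma_i(m)\ge -\sigma_i(w)$ for all $i$, that is, onto $T(-\sigma(w))=T(\sigma(-w))$. Since $-w\in\frac{1}{p^e}\sfM\subset\sfM_\RR$, Definition~\ref{def_conic} shows that $T(\sigma(-w))=T(\ulcorner\sigma(-w)\urcorner)$ is conic. As $[w]$ runs over $\frac{1}{p^e}\sfM/\sfM$ this exhibits $R^{1/p^e}$ as a direct sum of conic divisorial ideals, which is the first assertion.

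For the second assertion I would argue that every conic ideal is realized for $e\gg0$. Any conic ideal has the form $T(\ulcorner\sigma(\bfx)\urcorner)$ with $\bfx\in\sfM_\RR$, and $T(\sigma(v))\cong T(\sigma(v'))$ whenever $v-v'\in\sfM$, because the two then differ by the principal divisor $\sigma(v-v')$ (multiplication by the corresponding monomial is the isomorphism). Since $\langle m,v_i\rangle\in\ZZ$ for all $m\in\sfM$, the rational arrangement $\{\sigma_i(-)\in\ZZ\}_i$ is invariant under translation by $\sfM$; hence it has finitely many chambers modulo $\sfM$, the round-up $\ulcorner\sigma(-)\urcorner$ is constant on each, and $\calC(R)$ is finite. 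Writing $\bfa=\ulcorner\sigma(\bfx)\urcorner$, the conic ideal attached to $\bfx$ is the one attached to the open chamber $C=\{v\in\sfM_\RR : a_i-1<\sigma_i(v)<a_i \text{ for all } i\}$, whose closure contains $\bfx$; here $C$ is nonempty and open, as one sees by pushing $\bfx$ slightly inward along a vector in the interior of $\tau^\vee$. I would then use that $\frac{1}{p^e}\sfM$ becomes arbitrarily fine as $e\to\infty$ to find, for $e\gg0$, a point $v\in C\cap\frac{1}{p^e}\sfM$; the coset $[-v]$ contributes the summand $T(\sigma(v))\cong T(\ulcorner\sigma(\bfx)\urcorner)$. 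Running over the finitely many conic ideals produces a single $e$ for which all of them appear.

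The main obstacle is this realization step: one must align the rounding convention $\ulcorner\;\urcorner$ with the half-open chambers so that the conic ideal of a point on a wall is captured by a genuinely \emph{open} chamber $C$, verify that $C$ is nonempty (using that $\tau^\vee$ is full-dimensional), and then guarantee quantitatively that the refining lattices $\frac{1}{p^e}\sfM$ meet every such $C$. By contrast, the decomposition and the identification in the first part are formal once the $\frac{1}{p^e}\sfM/\sfM$-grading is in place; it is the density argument, together with the finiteness of $\calC(R)$, where the care is needed.
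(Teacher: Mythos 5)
Your proposal is correct and takes essentially the same route as the argument behind this theorem: the paper itself gives no proof (it cites \cite[Proposition~3.6]{BG1} and \cite[Proposition~3.2.3]{SmVdB}), and those sources proceed exactly as you do, identifying $R^{1/p^e}$ with $\kk[\tau^\vee\cap\frac{1}{p^e}\sfM]$ (cf.\ Remark~\ref{rem_toric_charp}), decomposing it by the $\frac{1}{p^e}\sfM/\sfM$-grading into coset pieces isomorphic to $T(\ulcorner\sigma(-w)\urcorner)$, and obtaining the last claim from the finiteness of $\calC(R)$ together with the refinement of the lattices $\frac{1}{p^e}\sfM$ meeting every open chamber.
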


\begin{remark}
\label{rem_toric_charp}
Although the $R$-module $R^{1/p^e}$ is obtained via the Frobenius morphism, this is also constructed as $\kk[\tau^\vee\cap\frac{1}{p^e}\sfM]$. 
To consider the latter one, we do not need the Frobenius morphism, and hence the assumption ${\rm char}\,\kk=p>0$ is not so important in our situation. 
Nevertheless, we will continue this setting, because our purpose is to determine numerical invariants originated from the positive characteristic framework. 
Thus, in the rest of this paper, we suppose that $\kk$ is an algebraically closed field of ${\rm char}\,\kk=p>0$, in which case $R$ is strongly $F$-regular. 
\end{remark}

Since $R^{1/p^e}$ is a maximal Cohen-Macaulay (= MCM) $R$-module, we have that any conic divisorial ideal is also MCM by Theorem~\ref{motivation_thm}. 
It is known that the number of isomorphism classes of rank one MCM modules is finite (see \cite[Corollary~5.2]{BG1}), 
thus $\calC(R)$ is a finite set. 
As a conclusion, we have that $R$ has FFRT by the FFRT system $\calC(R)$. 

The following is another characterization of conic divisorial ideals. This plays a crucial role for computing the generalized $F$-signatures. 

\begin{lemma}[{see \cite[Corollary 1.2]{Bru}}] 
\label{conic_characterization}
Let the notation be the same as above. 
Let $L_{i,a_i}=\{{\bfx} \in \sfM_\RR \mid a_i-1<\sigma_i({\bfx}) \leq a_i\}$. 
We see that any conic divisorial ideal is isomorphic to $T(a_1,\cdots,a_n)$ for some $(a_1,\cdots,a_n)\in\ZZ^n$ satisfying the following condition; 

\begin{itemize}
\item the cell $\bigcap_{i=1}^n L_{i,a_i}$ is a full-dimensional cell of the decomposition of the semi-open cube $(-1,0]^d$ 
given by hyperplanes $H_{i,m}=\{{\bf x}\in\sfM_\RR \mid \sigma_i({\bf x})=m \}$ for some $m \in\ZZ$ and $i=1,\cdots,n$.
\end{itemize}
\end{lemma}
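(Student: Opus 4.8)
The plan is to connect Bruns' characterization of conic divisorial ideals (\cite[Corollary 1.2]{Bru}) with the geometry of the semi-open cube $(-1,0]^d$. Recall that by Definition~\ref{def_conic}, a divisorial ideal $T(\bfa)$ is conic precisely when $\bfa=\ulcorner\sigma(\bfx)\urcorner$ for some $\bfx\in\sfM_\RR$. The key observation is that the condition $\bfa=\ulcorner\sigma(\bfx)\urcorner$, written out coordinatewise, says exactly that $a_i-1<\sigma_i(\bfx)\le a_i$ for each $i$, which is the statement $\bfx\in L_{i,a_i}$. Thus for a fixed vector $\bfa=(a_1,\dots,a_n)\in\ZZ^n$, the set of $\bfx$ realizing $T(\bfa)$ as a conic ideal is exactly the cell $\bigcap_{i=1}^n L_{i,a_i}$. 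So the first step is to make this translation precise: $T(\bfa)$ is conic if and only if $\bigcap_{i=1}^n L_{i,a_i}\neq\emptyset$.

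First I would reduce the range of $\bfx$ to a fundamental domain. Since $T(\bfa)$ depends only on $\ulcorner\sigma(\bfx)\urcorner$, and since translating $\bfx$ by a lattice vector in $\sfM$ shifts $\sigma_i(\bfx)$ by the integer $\sigma_i(\text{lattice vector})$, one checks that replacing $\bfx$ by $\bfx+v$ for $v\in\sfM$ correspondingly shifts $\bfa$; this does not change the isomorphism class of the conic ideal, as conic ideals are understood only up to the action of the class group. Hence every conic ideal is realized by some $\bfx$ lying in a chosen fundamental domain for the lattice action, and the natural choice here is the semi-open cube $(-1,0]^d$ (with respect to a suitable basis), since its half-open boundary conventions match exactly the half-open defining inequalities $a_i-1<\sigma_i(\bfx)\le a_i$ of the cells $L_{i,a_i}$. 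This is where I expect the routine bookkeeping to live: verifying that the cube $(-1,0]^d$ is a genuine fundamental domain and that the half-open conventions line up so that each conic ideal is hit exactly once.

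Next I would identify the cells geometrically. The hyperplanes $H_{i,m}=\{\bfx\mid\sigma_i(\bfx)=m\}$ for $m\in\ZZ$ slice $\sfM_\RR$, and restricting to the cube $(-1,0]^d$ they induce a polyhedral decomposition. By construction, the relatively open regions of this decomposition are precisely the nonempty sets $\bigcap_{i=1}^n L_{i,a_i}$; indeed, for $\bfx$ in such an open region, each $\sigma_i(\bfx)$ lies strictly between consecutive integers, so $a_i=\ulcorner\sigma_i(\bfx)\urcorner$ is locally constant there. The full-dimensional cells correspond exactly to those $\bfa$ for which the cell $\bigcap_i L_{i,a_i}$ has nonempty interior, i.e.\ contains an open subset of $\sfM_\RR$.

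The main point, and the step I expect to be the crux, is to upgrade ``$\bigcap_i L_{i,a_i}\neq\emptyset$'' to ``$\bigcap_i L_{i,a_i}$ is \emph{full-dimensional}.'' In principle a nonempty cell could be lower-dimensional (lying on some hyperplane $H_{i,m}$), and one must rule out that such degenerate cells produce new conic ideals. The resolution is that if the cell $\bigcap_i L_{i,a_i}$ has dimension less than $d$, then it lies in the closure of full-dimensional cells, and a small generic perturbation of $\bfx$ into an adjacent full-dimensional cell either yields the same $\bfa$ (if the perturbation stays within the half-open cell) or a different $\bfa$; using the round-up convention, the point $\bfx$ on the ``upper'' boundary $\sigma_i(\bfx)=a_i$ already satisfies $\bfx\in L_{i,a_i}$, so the round-up value is actually attained on a full-dimensional cell sharing that boundary. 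Thus every conic ideal $T(\bfa)$ arising from some $\bfx$ already arises from an $\bfx$ in the interior of a full-dimensional cell, giving the same $\bfa$. Assembling these observations yields that the conic divisorial ideals are in bijection with the full-dimensional cells of the decomposition of $(-1,0]^d$, which is the claim. I would cite \cite[Corollary 1.2]{Bru} for the precise form of this correspondence and for the verification that the round-up conventions are compatible, treating that reference as providing the technical heart of the dimension argument.
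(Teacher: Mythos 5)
First, a point of reference: the paper offers no proof of this lemma at all --- it is imported wholesale from \cite[Corollary 1.2]{Bru} --- so there is no internal argument to compare yours against. Your reconstruction follows what is essentially the standard (Bruns') line, and its skeleton is correct: the identity $\bfa=\ulcorner\sigma(\bfx)\urcorner$ is, coordinatewise, exactly the statement $\bfx\in\bigcap_{i=1}^n L_{i,a_i}$; a lattice translation $\bfx\mapsto\bfx+v$ with $v\in\sfM$ shifts $\bfa$ by $\sigma(v)\in\ZZ^n$ and hence does not change the divisor class of $T(\bfa)$; so every conic class is realized by some $\bfx$ in the fundamental domain $(-1,0]^d$, and it remains to pass from ``nonempty cell'' to ``full-dimensional cell.''

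Two corrections, however. The crux --- your dimension upgrade --- is not achieved by a \emph{generic} perturbation: a generic perturbation may push some $\sigma_i(\bfx)$ strictly above the integer $a_i$, which changes the round-up and hence the class being represented. What is needed is a direction decreasing all tight functionals simultaneously: since $\tau$ is strongly convex, $\tau^\vee$ has nonempty interior, and any $u$ in that interior satisfies $\sigma_i(u)=\langle u,v_i\rangle>0$ for \emph{every} $i$; then $\bfy=\bfx-\epsilon u$ with $\epsilon>0$ small satisfies $a_i-1<\sigma_i(\bfy)<a_i$ for all $i$, so $\bfy$ lies interior to the same cell, which is therefore full-dimensional (and if $\bfy$ leaves the cube, translate back by a lattice vector, which alters $\bfa$ only within its class). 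Since you defer exactly this step to \cite{Bru} --- as the paper itself does for the whole lemma --- this is not fatal, but your sketch would not stand alone without it. Second, your concluding claim that conic divisorial ideals are ``in bijection with'' the full-dimensional cells (and the phrase ``each conic ideal is hit exactly once'') is false for general toric rings: distinct cells whose $\bfa$-vectors differ by an element of $\sigma(\sfM)$ yield isomorphic ideals. This is precisely why Theorem~\ref{Thm_Fsig_volume} computes the generalized $F$-signature as a \emph{sum} of volumes over all cells representing the given module. The lemma asserts only that every conic ideal is realized by \emph{some} full-dimensional cell, and that surjectivity is all your argument establishes; the stronger bijectivity claim should be dropped.
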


Thus, observing the decomposition of $(-1,0]^d$ as in Lemma~\ref{conic_characterization}, we can obtain the precise description of conic classes, 
but this computation is sometimes complicated. 
In the next section, we will consider a special class of toric rings called \emph{Hibi rings}, which are defined from a partially ordered set (= poset). 
In that case, we can easily understand the description of conic classes using the associated poset structure (see Theorem~\ref{conic_Hibi}). 

\medskip

Let $R$ be a toric ring as above. As we mentioned, a toric ring is strongly $F$-regular and has FFRT by the FFRT system $\calC(R)$. 
Thus, if we set $\calC(R)=\{M_0:=R, M_1,\cdots, M_n\}$, then for $e\in\NN$, we can write 
$$R^{1/p^e}\cong R^{\oplus c_{0,e}}\oplus M_1^{\oplus c_{1,e}}\oplus\cdots\oplus M_n^{\oplus c_{n,e}}$$
for some $c_{i,e}\ge0$. 
In this case, the generalized $F$-signature $s(M_i,R)=\displaystyle\lim_{e\rightarrow\infty}\frac{c_{i,e}}{p^{ed}}$ exists and is strictly positive as we mentioned in Section~\ref{sec_intro}. 
Furthermore, we can use the combinatorial description of conic divisorial ideals given in Lemma~\ref{conic_characterization} to compute the generalized $F$-signatures. 

\begin{theorem}[{\cite[Section~3]{Bru}, see also \cite{Kor,Sin,WY}}]
\label{Thm_Fsig_volume}
Let $M$ be a conic divisorial ideal of $R$, which certainly appears in $R^{1/p^e}$ as a direct summand for $e\gg 0$ (see Theorem~\ref{motivation_thm}). 
Then, the generalized $F$-signature of $M$ is the sum of volumes of the full-dimensional cell $\bigcap_{i=1}^n L_{i,a_i}$ 
appearing in the decomposition of $(-1,0]^d$ (see Lemma~\ref{conic_characterization}) such that $M\cong T(a_1,\cdots, a_n)$. 
\end{theorem}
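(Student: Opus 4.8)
The plan is to compute the multiplicity $c_{i,e}$ of $M$ in $R^{1/p^e}$ directly from the monoid structure, and then pass to the limit via a lattice-point count. By Remark~\ref{rem_toric_charp} we identify $R^{1/p^e}$ with $\kk[\tau^\vee\cap\frac{1}{p^e}\sfM]$, which carries a natural $\sfM$-grading refining its $R$-module structure (here $R=\kk[\tau^\vee\cap\sfM]$ acts by translation by integral points of $\tau^\vee$). I would decompose $R^{1/p^e}$ into homogeneous pieces indexed by the finite quotient $\frac{1}{p^e}\sfM/\sfM\cong(\ZZ/p^e\ZZ)^d$, which has exactly $p^{ed}$ elements. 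For a representative $\bfx\in\frac{1}{p^e}\sfM$, the corresponding piece is spanned over $\kk$ by the monomials $t^{\mathbf{w}}$ with $\mathbf{w}\in(\bfx+\sfM)\cap\tau^\vee$; writing $\mathbf{w}=\bfx+\mathbf{m}$ with $\mathbf{m}\in\sfM$, the condition $\sigma_i(\mathbf{w})\ge 0$ becomes $\sigma_i(\mathbf{m})\ge\ulcorner-\sigma_i(\bfx)\urcorner$ because $\sigma_i(\mathbf{m})\in\ZZ$. Hence this piece is isomorphic, via multiplication by $t^{-\bfx}$, to the conic divisorial ideal $T(\ulcorner\sigma(-\bfx)\urcorner)$ in the sense of Definition~\ref{def_conic}.

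Each homogeneous piece is a rank-one reflexive module, hence indecomposable, so the graded decomposition above is already the indecomposable decomposition of $R^{1/p^e}$. Therefore $c_{i,e}$ equals the number of classes in $\frac{1}{p^e}\sfM/\sfM$ whose associated conic ideal is isomorphic to $M$. Replacing $\bfx$ by $-\bfx$ (a bijection on classes) and using that $(-1,0]^d$ is a fundamental domain for the translation action of $\sfM$ on $\sfM_\RR$, I would take representatives in $(-1,0]^d$; then the class of $\bfx$ contributes a summand isomorphic to $T(\ulcorner\sigma(\bfx)\urcorner)$, and by Lemma~\ref{conic_characterization} the isomorphism type of this ideal is constant on each full-dimensional cell $\bigcap_{i=1}^n L_{i,a_i}$ of the decomposition of $(-1,0]^d$; write $T_C$ for this common type on a cell $C$. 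A point worth emphasizing is that several distinct cells may yield divisorial ideals that differ by a principal divisor $\sigma(\mathbf{m})$, $\mathbf{m}\in\sfM$, and are thus isomorphic to $M$; all of these must be collected, which is the origin of the \emph{sum} of volumes in the statement. Consequently
\[
c_{i,e}=\sum_{C}\#\bigl(C\cap\tfrac{1}{p^e}\sfM\bigr),
\]
where $C$ ranges over the full-dimensional cells in $(-1,0]^d$ with $T_C\cong M$.

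It then remains to evaluate $\lim_{e\to\infty}p^{-ed}\,c_{i,e}$. For each fixed cell $C$ the points of $\frac{1}{p^e}\sfM$ form a grid of covolume $p^{-ed}$, so $p^{-ed}\#\bigl(C\cap\frac{1}{p^e}\sfM\bigr)$ is a Riemann sum converging to $\vol(C)$; the only care needed is that the contribution of lattice points near $\partial C$ (together with the half-open conventions built into $L_{i,a_i}$ and into $(-1,0]^d$) is $O(p^{e(d-1)})$, hence negligible after dividing by $p^{ed}$. Summing over the finitely many cells $C$ with $T_C\cong M$ yields $s(M,R)=\lim_{e\to\infty}c_{i,e}/p^{ed}=\sum_{C}\vol(C)$, as claimed. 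I expect the main obstacle to be the precise bookkeeping in the middle step: keeping the sign and round-up conventions consistent so that each graded piece is correctly identified with a conic ideal, and accounting for all cells whose ideals coincide up to a principal divisor. By contrast, the boundary estimate in the final limit is standard once the cells are fixed.
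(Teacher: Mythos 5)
The paper gives no proof of this theorem: it is quoted from \cite{Bru} (see also \cite{Kor,Sin,WY}). Your argument is essentially a reconstruction of the standard proof in those references: identify $R^{1/p^e}$ with $\kk[\tau^\vee\cap\frac{1}{p^e}\sfM]$, split it into $R$-module summands along the grading by $\frac{1}{p^e}\sfM/\sfM$, identify the summand of a class $\bfx$ with the conic ideal $T(\ulcorner\sigma(-\bfx)\urcorner)$ via multiplication by $t^{-\bfx}$, observe that rank-one reflexive modules over a domain are indecomposable, and count classes per isomorphism type by equidistribution of $\frac{1}{p^e}\sfM$ in the fundamental domain $(-1,0]^d$. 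So your proposal is correct in substance and follows exactly the route the paper relies on by citation; note only that well-definedness of the multiplicities $c_{i,e}$ requires the Krull--Schmidt property, i.e., the completion step of Remark~\ref{KS condition}, which you should invoke explicitly.

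One step is stated too strongly: the claimed equality $c_{i,e}=\sum_C\#\bigl(C\cap\frac{1}{p^e}\sfM\bigr)$, with $C$ running over the \emph{full-dimensional} cells satisfying $T_C\cong M$, is not exact in general. The half-open cells $\bigcap_i L_{i,a_i}\cap(-1,0]^d$ partition the cube, but nonempty cells of dimension less than $d$ can occur: for instance, if $-e_1$ is one of the generators $v_i$, so that $\sigma_i=-x_1$, then the cell with $a_i=0$ is forced into the hyperplane $x_1=0$ by the half-open conventions. Points of $\frac{1}{p^e}\sfM$ lying in such lower-dimensional cells also contribute summands, and by Lemma~\ref{conic_characterization} their isomorphism types coincide with types arising from full-dimensional cells, so they can genuinely enlarge $c_{i,e}$ beyond your sum. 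The repair is immediate and in the spirit of your final paragraph: these extra points lie on finitely many hyperplanes, hence number $O(p^{e(d-1)})$, the same order as the boundary terms you already discard, so the limit $s(M,R)=\sum_C\vol(C)$ is unaffected.
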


\section{Observations on generalized $F$-signatures for Hibi rings}
\label{sec_Fsig_Hibi}

In this section, we study conic divisorial ideals and their generalized $F$-signatures for Hibi rings, which are toric rings arising from partially ordered set (= poset). 
Let $P=\{p_1,\cdots,p_{d-1}\}$ be a finite poset equipped with a partial order $\prec$. 
For $p_i, p_j \in P$, we say that $p_i$ {\em covers} $p_j$ if $p_j \prec p_i$ and there is no $p' \in P$ with $p_i \neq p'$ and $p_j \neq p'$ such that $p_j \prec p' \prec p_i$. 
We say that $P$ is \emph{pure} if all of the maximal chains $p_{i_1} \prec \cdots \prec p_{i_\ell}$ have the same length. 
We denote the unique minimal (resp. maximal) element not belonging to a poset $P$ by $\hat{0}$ (resp. $\hat{1}$), 
and we sometimes use the notation $p_0=\hat{0}$ and $p_d=\hat{1}$. 
We then set $\widehat{P}=P \cup \{\hat{0}, \hat{1}\}$. 
We say that $e=\{p_i,p_j\}$, where $0 \leq i \not= j \leq d$, is an {\em edge} of $\widehat{P}$ if $e$ is an edge of the Hasse diagram of $\widehat{P}$ 
(i.e., $p_i$ covers $p_j$ or $p_j$ covers $p_i$). 
For each edge $e=\{p_i,p_j\}$ of $\widehat{P}$ with $p_i \prec p_j$, 
let $\sigma_e$ be a linear form in $\RR^d$ defined by 
\begin{align*}
\sigma_e({\bf x})\coloneqq
\begin{cases}
x_i-x_j, \;&\text{ if }j \not= d, \\
x_i, \; &\text{ if }j=d 
\end{cases}
\end{align*}
where $\bfx=(x_0,x_1,\cdots,x_{d-1})$. 
Let $\tau_P=\mathrm{Cone}(\sigma_e \mid e \text{ is an edge of }\widehat{P}) \subset \sfN_\RR$. 
The toric ring $\kk[P]\coloneqq\kk[\tau_P^\vee \cap \ZZ^d]$ is called a \emph{Hibi ring} associated with $P$. 
We remark that this definition of a Hibi ring is different from the original one \cite{Hibi}. 
Originally, a Hibi ring is defined via the distributive lattice obtained by considering poset ideals of a poset $P$, 
but our $\kk[P]$ is isomorphic to the original one (see \cite{HHN} for more details). 
Some fundamental properties on Hibi rings were also discussed in \cite{Hibi}. 
For example, it is known that $\dim \kk[P]=|P|+1$, and we have that $\kk[P]$ is Gorenstein if and only if $P$ is pure.

On the other hand, divisorial ideals of $\kk[P]$ form the group called the \emph{class group} $\Cl(\kk[P])$. 
When we consider a divisorial ideal $I$ as the element of $\Cl(\kk[P])$, we denote it by $[I]$. 
By construction, there is a bijection between divisorial ideals with the form $D_i\coloneqq D(0,\cdots,0,\overset{i}{\check{1}},0,\cdots,0)$ and edges of $\widehat{P}$. 
Thus, we denote by $e_i$ the edge of $\widehat{P}$ corresponding to the divisorial ideal $D_i$. 
Let $e_1,\cdots,e_n$ be edges of $\widehat{P}$. 
Then, divisorial ideals $[D_1],\cdots, [D_n]$ generates $\Cl(\kk[P])$. 
To understand the precise generators of $\Cl(\kk[P])$, we prepare some terminologies. 
We say that a sequence $C=(p_{k_1},\cdots,p_{k_m})$ is a {\em cycle} in $\widehat{P}$ if $C$ forms a cycle 
in the Hasse diagram of $\widehat{P}$, that is, $p_{k_i} \not= p_{k_j}$ for $1 \leq i \not= j \leq m$ 
and each $\{p_{k_i},p_{k_{i+1}}\}$ is an edge of $\widehat{P}$ for $1 \leq i \leq m$, where $p_{k_{m+1}}=p_{k_1}$. 
Moreover, we say that a cycle $C$ is a {\em circuit} if $\{p_{k_i},p_{k_j}\}$ is not an edge of $\widehat{P}$ 
for any $1 \leq i,j \leq m$ with $|i-j| \geq 2$. 
Then, we define a {\em spanning tree} as a set of $d$ edges $e_{i_1},\cdots,e_{i_d}$ of $\widehat{P}$ that 
forms a spanning tree of the Hasse diagram of $\widehat{P}$ 
(i.e., any element in $\widehat{P}$ is an endpoint of some edge $e_{i_j}$ and the edges $e_{i_1},\cdots,e_{i_d}$ do not form cycles). 
We remark that a spanning tree is not determined uniquely. 

We now fix the notation used in the rest of this paper. 

\begin{settings}
\label{basic_setting}
Let $P=\{p_1,\cdots,p_{d-1}\}$ be a finite poset, and $e_1,\cdots,e_n$ be the edges of $\widehat{P}=P \cup \{\hat{0}, \hat{1}\}$. 
We fix a spanning tree of $\widehat{P}$ and let $e_1,\cdots,e_d$ be its edges for simplicity. 
Thus, $e_{d+1},\cdots,e_n$ are the remaining edges of $\widehat{P}$. 
Let $\kk[P]$ be the Hibi ring associated with $P$, which is a toric ring with $\dim\kk[P]=d$. 
Here, we assume that $\kk$ is an algebraically closed field of characteristic $p>0$ (see Remark~\ref{rem_toric_charp}). 

Moreover, when we consider the structure of $\kk[P]^{1/p^e}$, we can reduce the case to 
$\widehat{\kk[P]}^{1/p^e}\cong \widehat{\kk[P]}\otimes\kk[P]^{1/p^e}$, where $\widehat{\kk[P]}$ is the $\fkm$-adic completion and 
$\fkm$ is the irrelevant ideal of $\kk[P]$, in which case the Krull-Schmidt condition holds (see Remark~\ref{KS condition}). 
\end{settings}

\begin{theorem}[{see \cite{HHN}}]
Let the notation be the same as in Settings~\ref{basic_setting}. We see that the class group $\Cl(\kk[P])$ is isomorphic to $\ZZ^{n-d}$, 
which is generated by $[D_{d+1}], \cdots, [D_n]$. 
\end{theorem}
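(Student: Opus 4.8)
The plan is to realize $\Cl(\kk[P])$ as the cokernel of an explicit integer matrix and then split that matrix using the chosen spanning tree. For a $d$-dimensional toric ring $R=\kk[\tau^\vee\cap\sfM]$ whose cone $\tau=\mathrm{Cone}(v_1,\cdots,v_n)$ has primitive ray generators $v_1,\cdots,v_n$, the torus-invariant prime divisors are in bijection with the rays, and one has the standard presentation of the divisor class group (see, e.g., \cite{BG2})
\begin{equation*}
0\longrightarrow \sfM \xrightarrow{\ \sigma\ } \ZZ^n \longrightarrow \Cl(R)\longrightarrow 0,
\end{equation*}
where $\sigma(\bfx)=(\sigma_1(\bfx),\cdots,\sigma_n(\bfx))$ and the $i$-th standard basis vector of $\ZZ^n$ maps to $[D_i]$. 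The map $\sigma$ is injective because $\tau$ is $d$-dimensional, so the $v_i$ span $\sfN_\RR$ and the forms $\sigma_i$ separate points of $\sfM_\RR$. Hence $\Cl(\kk[P])\cong\Coker(\sigma)$, and it suffices to analyze the integer matrix of $\sigma$.

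Next I would make the combinatorics of $\sigma$ explicit for a Hibi ring. Since $\sigma_e(\bfx)=x_i-x_j$ for an edge $e=\{p_i,p_j\}$ with $p_i\prec p_j$ and $j\neq d$, while $\sigma_e(\bfx)=x_i$ when $p_j=\hat{1}=p_d$ (the coordinate $x_d$ being absent from $\bfx=(x_0,\cdots,x_{d-1})$), the matrix of $\sigma$ is precisely the transpose of the reduced oriented incidence matrix of the Hasse diagram of $\widehat{P}$, with the row of the vertex $\hat{1}$ deleted (``grounded''). The Hasse diagram of $\widehat{P}$ is connected, since every element lies on a chain from $\hat{0}$ to $\hat{1}$, and it has $d+1$ vertices, so a spanning tree has exactly $d$ edges, matching the edges $e_1,\cdots,e_d$ fixed in Settings~\ref{basic_setting}.

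I would then use the spanning tree to split $\sigma$. Let $\pi\colon\ZZ^n\to\ZZ^d$ be the projection onto the coordinates indexed by the tree edges $e_1,\cdots,e_d$. The key claim is that $\pi\circ\sigma\colon\ZZ^d\to\ZZ^d$ is an isomorphism over $\ZZ$; equivalently, the $d\times d$ submatrix of the reduced incidence matrix on the columns of the spanning tree has determinant $\pm1$. This is the classical unimodularity of spanning-tree submatrices of incidence matrices, which I would establish by induction on $d$: a spanning tree always has a leaf vertex $v\neq\hat{1}$, whose row in the submatrix has a single nonzero entry $\pm1$; expanding the determinant along that row removes $v$ together with its unique tree edge and reduces the problem to a spanning tree of the graph on the remaining vertices (the base case being a single edge).

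Granting the claim, $\sigma$ is a split injection with $\ZZ^n=\Image(\sigma)\oplus\ker\pi$: indeed $\Image(\sigma)\cap\ker\pi=0$, and any $\bfv\in\ZZ^n$ decomposes as $\bfv=\sigma(\bfx)+(\bfv-\sigma(\bfx))$ with $\bfx=(\pi\sigma)^{-1}\pi(\bfv)$, where the second summand lies in $\ker\pi$. Since $\ker\pi$ is freely generated by the standard basis vectors indexed by the non-tree edges $e_{d+1},\cdots,e_n$, the composite $\ker\pi\hookrightarrow\ZZ^n\twoheadrightarrow\Coker(\sigma)=\Cl(\kk[P])$ is an isomorphism carrying those basis vectors to $[D_{d+1}],\cdots,[D_n]$, which yields $\Cl(\kk[P])\cong\ZZ^{n-d}$ with the asserted generators. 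The main obstacle is the unimodularity claim; everything else is formal once the class-group exact sequence is in place, and I would be careful to track the grounding of $\hat{1}$ so that the deleted row never interferes with the leaf-removal induction.
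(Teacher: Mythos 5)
Your proof is correct, but there is no internal proof to compare it against: the paper states this theorem with the citation \cite{HHN} and proves nothing itself, the cited argument being carried out in the distributive-lattice/semigroup-ring framework of Hibi's original definition rather than in the toric coordinates of Settings~\ref{basic_setting}. Your route --- the standard presentation $0 \to \sfM \xrightarrow{\ \sigma\ } \ZZ^n \to \Cl(\kk[P]) \to 0$ for a full-dimensional normal toric ring, followed by the observation that the matrix of $\sigma$ is the grounded oriented incidence matrix of the Hasse diagram of $\widehat{P}$ (row of $\hat{1}$ deleted), whose spanning-tree submatrices are unimodular --- is a clean, self-contained alternative, and it buys something the citation does not: it makes the generator statement transparent, since splitting $\ZZ^n = \Image(\sigma) \oplus \Ker\pi$ along the tree coordinates exhibits $[D_{d+1}],\cdots,[D_n]$ as a free basis, which is exactly the normalization that $\calC(P)$ and Observation~\ref{obs_fulldim_cell} rely on later (indeed, your identity $\sigma_{e_{d+j}} = -\sum_{e_\ell \in Y_{C_j^+}}\sigma_{e_\ell} + \sum_{e_{\ell'} \in Y_{C_j^-}}\sigma_{e_{\ell'}}$ in disguise is what that observation uses). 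Two small points to tighten. First, your leaf-removal induction momentarily conflates rows and columns: a leaf vertex $v \neq \hat{1}$ indexes a row of the reduced incidence matrix but a column of the matrix of $\sigma$; this is harmless because determinants are transpose-invariant, but state it consistently. Second, the exact sequence silently uses that the $n$ edges of $\widehat{P}$ index exactly the extremal rays of $\tau_P$, i.e., that no $\sigma_e$ is redundant in $\mathrm{Cone}(\sigma_e \mid e)$; the paper asserts this bijection between the divisorial ideals $D_i$ and the edges just before the theorem, so you may take it as given, but it deserves an explicit word, since the presentation of $\Cl(\kk[P])$ as $\ZZ^n/\sigma(\sfM)$ with the stated generators would fail if some $\sigma_e$ did not span a ray of $\tau_P$.
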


In what follows, we will discuss that which elements in $\Cl(\kk[P])\cong \ZZ^{n-d}$ correspond to conic classes. 
For a cycle $C=(p_{k_1},\cdots,p_{k_m})$ in $\widehat{P}$, let 
\begin{align*}
X_C^+&=\{\{p_{k_i},p_{k_{i+1}}\} \mid 1 \leq i \leq m, \;  p_{k_i} \prec p_{k_{i+1}}\}, \\
X_C^-&=\{\{p_{k_i},p_{k_{i+1}}\} \mid 1 \leq i \leq m, \; p_{k_{i+1}} \prec p_{k_i}\}, \\
Y_C^{\pm}&=X_C^{\pm} \cap \{e_1,\cdots,e_d\}, \text{ and }\; 
Z_C^{\pm} = X_C^\pm \cap \{e_{d+1},\cdots,e_n\}, 
\end{align*}
where $p_{k_{m+1}}=p_{k_1}$. 
Let $\calC(P)$ be a convex polytope defined by 
\begin{equation}
\label{ccp}
\calC(P)=\Bigg\{(z_1,\cdots,z_{n-d}) \in \RR^{n-d} \hspace{0.1cm}\Big| \hspace{0.1cm}
-|X_C^-|+1 \leq \sum_{e_{d+\ell} \in Z_C^+}  z_\ell -\sum_{e_{d+\ell'} \in Z_C^-}  z_{\ell'}\leq |X_C^+|-1 \Bigg\},
\end{equation}
where $C=(p_{k_1},\cdots,p_{k_m})$ runs over all circuits in $\widehat{P}$. 
Then, we have a characterization of the conic divisorial ideals of a Hibi ring as follows. 

\begin{theorem}[{see \cite[Theorem~2.4]{HiN}}]
\label{conic_Hibi}
Let the notation be the same as in Settings~\ref{basic_setting}. 
Then, we see that each point in $\calC(P) \cap \ZZ^{n-d}$ one-to-one corresponds to a conic divisorial ideal of $\kk[P]$. 
\end{theorem}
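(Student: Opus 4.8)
The plan is to translate the property of being conic into a feasibility problem for a system of difference constraints on the vertices of $\widehat{P}$, and then to match the feasibility criterion against the inequalities defining $\calC(P)$. First I would set up the reduction. Since $e_1,\dots,e_d$ form a spanning tree of $\widehat{P}$, the linear forms $\sigma_{e_1},\dots,\sigma_{e_d}$ constitute a reduced incidence matrix of the tree (with $\hat{1}$ as reference vertex, i.e. $x_d=0$), which is unimodular; hence $\bfx\mapsto(\sigma_{e_1}(\bfx),\dots,\sigma_{e_d}(\bfx))$ is a $\ZZ$-isomorphism, and every class in $\Cl(\kk[P])\cong\ZZ^{n-d}$ has a unique representative $T(\bfa)$ with $a_1=\cdots=a_d=0$, whose remaining coordinates are precisely the coordinates $(z_1,\dots,z_{n-d})$ with respect to the generators $[D_{d+1}],\dots,[D_n]$ recalled above. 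Since translating $\bfx$ by a lattice vector changes $\ulcorner\sigma(\bfx)\urcorner$ by an element of the character image, Definition~\ref{def_conic} shows that the class $(z_1,\dots,z_{n-d})$ is conic if and only if there exists $\bfx\in\sfM_\RR$ with $-1<\sigma_{e_i}(\bfx)\le 0$ for $1\le i\le d$ and $z_\ell-1<\sigma_{e_{d+\ell}}(\bfx)\le z_\ell$ for $1\le\ell\le n-d$; this is exactly the statement that the corresponding cell of Lemma~\ref{conic_characterization} is non-empty.

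For the necessity direction (conic $\Rightarrow\calC(P)$), given such an $\bfx$ I would set $s_e=\sigma_e(\bfx)$ and use, for each circuit $C=(p_{k_1},\dots,p_{k_m})$, the telescoping identity $\sum_{i}(x_{k_i}-x_{k_{i+1}})=0$, which yields $\sum_{e\in X_C^+}s_e=\sum_{e\in X_C^-}s_e$. Splitting each side into its spanning-tree part ($Y_C^{\pm}$) and its remaining part ($Z_C^{\pm}$) and inserting the interval bounds on the $s_e$, the tree side is confined to the open interval $(-|X_C^-|,|X_C^+|)$, while the other side is pinned near $\sum_{e_{d+\ell}\in Z_C^+}z_\ell-\sum_{e_{d+\ell'}\in Z_C^-}z_{\ell'}$. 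Because the $z_\ell$ and the cardinalities $|X_C^{\pm}|$ are integers and the bounds on the $s_e$ are strict on the appropriate side, the two strict real inequalities sharpen to
\[
-|X_C^-|+1\ \le\ \sum_{e_{d+\ell}\in Z_C^+}z_\ell-\sum_{e_{d+\ell'}\in Z_C^-}z_{\ell'}\ \le\ |X_C^+|-1,
\]
so $(z_1,\dots,z_{n-d})\in\calC(P)\cap\ZZ^{n-d}$.

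The sufficiency direction ($\calC(P)\Rightarrow$ conic) is the heart of the matter and the step I expect to be the main obstacle. Here I would regard $\bfx$ as a potential function on the vertices of $\widehat{P}$ (with $\hat{1}$ set to $0$) and read the required bounds as a system of difference constraints $x_a-x_b\le c$ and $x_a-x_b\ge c'$ along the edges, where for full-dimensionality it suffices to realize the open version of the cell. A feasible potential exists precisely when the associated constraint graph has no infeasible cycle, and any infeasible cycle can be taken to be simple, hence a circuit of $\widehat{P}$; the feasibility condition attached to a circuit $C$, after substituting the bounds and using integrality, is exactly the pair of inequalities cutting out $\calC(P)$ for $C$. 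Consequently, when $(z_1,\dots,z_{n-d})\in\calC(P)$ all circuit conditions hold, a feasible $\bfx$ in the open cell exists, and $T(\ulcorner\sigma(\bfx)\urcorner)$ is then a conic divisorial ideal of the prescribed class. The delicate points are the reduction from arbitrary cycles to circuits and the careful bookkeeping of the strict (half-open) inequalities; the total unimodularity of the incidence matrix of $\widehat{P}$ is what makes this work, and this is where I would invoke, or reprove by induction on $n-d$, the feasible-potential theorem for network difference constraints.

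Finally, since $\kk[P]$ is a normal domain, isomorphism classes of divisorial ideals are in bijection with $\Cl(\kk[P])=\ZZ^{n-d}$, so distinct lattice points give non-isomorphic ideals. Combining this with the two inclusions established above yields the asserted one-to-one correspondence between $\calC(P)\cap\ZZ^{n-d}$ and the conic divisorial ideals of $\kk[P]$.
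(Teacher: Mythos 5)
Your setup and your necessity direction are sound, and they follow the same route as the paper (which does not actually reprove the theorem here but cites \cite[Theorem~2.4]{HiN}; Observation~\ref{obs_fulldim_cell} is precisely your spanning-tree reduction): after the unimodular change of coordinates, a class $(z_1,\cdots,z_{n-d})$ is conic if and only if the corresponding half-open cell is non-empty, and the telescoping identity around a circuit together with integrality correctly yields the inequalities cutting out $\calC(P)$.

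The gap is in the sufficiency direction, at the sentence ``any infeasible cycle can be taken to be simple, hence a circuit of $\widehat{P}$.'' In this paper a \emph{cycle} is by definition already simple (its vertices are distinct), whereas a \emph{circuit} is a \emph{chordless} cycle, and $\calC(P)$ imposes inequalities only for circuits. The feasible-potential theorem for difference constraints certifies infeasibility by a simple cycle, but that cycle may have chords, so membership in $\calC(P)$ does not directly rule it out: you still need the implication that if the inequalities hold for all circuits, then they hold for all cycles. This is true, but it is a genuine combinatorial step --- indeed it is the heart of \cite[Theorem~2.4]{HiN} --- and it works only because of the integral sharpening. Concretely, if a cycle $C$ has a chord $f$, split $C$ along $f$ into two shorter cycles $C_1,C_2$; writing $S_C=\sum_{e_{d+\ell} \in Z_C^+} z_\ell-\sum_{e_{d+\ell'} \in Z_C^-} z_{\ell'}$, one checks that $S_C=S_{C_1}+S_{C_2}$ (the chord's contributions cancel, whether or not $f$ is a tree edge) and $|X_C^{\pm}|=|X_{C_1}^{\pm}|+|X_{C_2}^{\pm}|-1$, so from $S_{C_i}\le |X_{C_i}^+|-1$ one gets $S_C\le |X_C^+|+1-2=|X_C^+|-1$, and similarly for the lower bound; induction on the number of vertices then reduces all cycles to circuits. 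Note that the un-sharpened real inequalities $S_{C_i}<|X_{C_i}^+|$ would only give $S_C<|X_C^+|+1$, which is not enough: the two $-1$'s gained from integrality are exactly what absorbs the chord. So this reduction is not contained in the standard network-feasibility toolkit you invoke; it has to be proved separately. Once it is supplied (together with a version of the feasibility theorem that handles your mixed strict/non-strict constraints), your argument does yield the theorem.
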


\begin{remark}
The expression of a divisorial ideal as the element of $\Cl(\kk[P])$ depends on a choice of a spanning tree. 
Thus, if we choose another spanning tree, the shape of $\calC(P)$ might be different. 
However, we have a bijection between such different expression via the relations of divisorial ideals. 
\end{remark} 

\begin{example}
\label{ex_typeN}
We consider a finite poset $P=\{p_1,p_2,p_3,p_4\}$ with $p_1\prec p_2$, $p_3\prec p_2$, and $p_3\prec p_4$. 
The following is the Hasse diagram of $\widehat{P}$. 
\begin{center}
\newcommand{\edgewidth}{0.07cm} 
\newcommand{\nodewidth}{0.06cm} 
\newcommand{\noderad}{0.2} 
\scalebox{0.6}{
\begin{tikzpicture}
\coordinate (Min) at (0,0); \coordinate (Max) at (0,4);
\coordinate (N1) at (-1.5,1); \coordinate (N2) at (-1.5,3); 
\coordinate (N3) at (1.5,1); \coordinate (N4) at (1.5,3); 

\draw[line width=\edgewidth]  (Min)--(N1) node[black,midway,xshift=-0.1cm,yshift=-0.3cm] {\LARGE $e_1$}; 
\draw[line width=\edgewidth]  (N1)--(N2) node[black,midway,xshift=-0.35cm,yshift=0cm] {\LARGE $e_2$}; 
\draw[line width=\edgewidth]  (Max)--(N2) node[black,midway,xshift=-0.1cm,yshift=0.35cm] {\LARGE $e_3$}; 
\draw[line width=\edgewidth]  (Min)--(N3) node[black,midway,xshift=0.1cm,yshift=-0.33cm] {\LARGE $e_4$}; 
\draw[line width=\edgewidth]  (N3)--(N4) node[black,midway,xshift=0.35cm,yshift=0cm] {\LARGE $e_5$}; 
\draw[line width=\edgewidth]  (Max)--(N4) node[black,midway,xshift=0.1cm,yshift=0.33cm] {\LARGE $e_6$}; 
\draw[line width=\edgewidth]  (N2)--(N3)  node[black,midway,xshift=0cm,yshift=0.35cm] {\LARGE $e_7$}; 

\draw [line width=\nodewidth, fill=lightgray] (Min) circle [radius=\noderad] ; 
\draw [line width=\nodewidth, fill=lightgray] (Max) circle [radius=\noderad] ; 
\draw [line width=\nodewidth, fill=white] (N1) circle [radius=\noderad] ; 
\draw [line width=\nodewidth, fill=white] (N2) circle [radius=\noderad] ;
\draw [line width=\nodewidth, fill=white] (N3) circle [radius=\noderad] ; 
\draw [line width=\nodewidth, fill=white] (N4) circle [radius=\noderad] ;


\node at (-2.1,1) {\LARGE$p_1$} ; \node at (-2.1,3) {\LARGE$p_2$} ; 
\node at (2.1,1) {\LARGE$p_3$} ; \node at (2.1,3) {\LARGE$p_4$} ; 
\node at (0,-0.7) {\LARGE$\widehat{0}=p_0$} ; \node at (0,4.7) {\LARGE$\widehat{1}=p_5$} ; 
\end{tikzpicture} 
}
\end{center}

In this case, the Hibi ring $\kk[P]$ associated with $P$ is defined by the cone in $\RR^5$ generated by 
$$\sigma_{e_1}={}^t(1,-1,0,0,0),\quad \sigma_{e_2}={}^t(0,1,-1,0,0),\quad \sigma_{e_3}={}^t(0,0,1,0,0),\quad \sigma_{e_4}={}^t(1,0,0,-1,0),$$
$$\sigma_{e_5}={}^t(0,0,0,1,-1),\quad \sigma_{e_6}={}^t(0,0,0,0,1),\quad \sigma_{e_7}={}^t(0,0,-1,1,0).$$
In particular, we have that $\dim\kk[P]=5$, and $\kk[P]$ is Gorenstein because $P$ is pure. 
Then, we take a spanning tree $\{e_1, e_2, e_3, e_4, e_5\}$. 
Since the edges $e_6, e_7$ are not contained in this spanning tree, the corresponding divisorial ideals generate the class group; 
$\Cl(R)=\langle [D_6], [D_7] \rangle\cong\ZZ^2$. 
In this case, the Hasse diagram of $\widehat{P}$ contains two circuits $\{p_0,p_1,p_2,p_3\}$ and $\{p_3,p_2,p_5,p_4\}$, and
we have that 
$$
\calC(P)=\{(z_1,z_2)\in\RR^2 \mid -1\le z_2\le1, \, -1\le z_2-z_1\le1\}. 
$$
Thus, the conic class can be represented by nine elements; 
$$
\calC(P)\cap\ZZ^2=\{(0,0), \pm(1,0), \pm(0,1), \pm(1,1), \pm(2,1)\}.
$$
\end{example}

\medskip

The proof of Theorem~\ref{conic_Hibi} (\cite[Theorem~2.4]{HiN}) relies on Lemma~\ref{conic_characterization}, that is, this can be proved by describing the full-dimensional cell decomposition of $(-1,0]^d$ in terms of the associated poset. 
Precisely, we have the description of the full-dimensional cell corresponding to a conic divisorial ideal represented by $\bfc=(c_1,\cdots,c_{n-d}) \in \calC(P) \cap \ZZ^{n-d}$ as follows (we only write the sketch of the argument, see \cite[The proof of Theorem~2.4]{HiN} for more details). 

\begin{observation}
\label{obs_fulldim_cell}
We use the notation in Settings~\ref{basic_setting}. 
We consider the transformation $\phi:\sfM_\RR\cong\RR^d\rightarrow\RR^d$ defined by 
${\bf x}=(x_0,x_1,\cdots,x_{d-1})\mapsto (\sigma_{e_1}(\bfx),\cdots,\sigma_{e_d}(\bfx))\eqqcolon(y_1,\cdots,y_d)$. 
We can see that $\phi$ is unimodular, and hence we may use this new coordinate system for studying conic divisorial ideals of $\kk[P]$. 
Then, for any $j=1,\cdots, {n-d}$, $\sigma_{e_{d+j}}(\bfx)$ can be 
described by the linear combination of $y_1,\cdots,y_d$. 
Precisely, by definition of a spanning tree, for any $j=1, \cdots, n-d$ there exists a unique cycle $C_j=(p_{k_1},\cdots,p_{k_m})$ in $\widehat{P}$ such that 
$\{p_{k_m},p_{k_1}\}=e_{d+j}$, $p_{k_m} \prec p_{k_1}$ and $Z_{C^+_j} \cup Z_{C^-_j} = \{e_{d+j}\}$. 
Using this cycle, we have that 
$$0=\sigma_{e_{d+j}}(\bfx)+\sum_{e_\ell \in Y_{C^+_j}} \sigma_{e_\ell}(\bfx)-\sum_{e_{\ell'} \in Y_{C^-_j}} \sigma_{e_{\ell'}}(\bfx)
=\sigma_{e_{d+j}}(\bfx)+\sum_{e_\ell \in Y_{C^+_j}} y_\ell-\sum_{e_{\ell'} \in Y_{C^-_j}} y_{\ell'}.$$

Thus, let $\widetilde{F}_\bfc$ be the full-dimensional cell consisting of $(y_1,\cdots,y_d)\in\RR^d$ satisfying the following conditions; 
\begin{itemize}
\setlength{\parskip}{0pt} 
\setlength{\itemsep}{3pt}
\item $-1< y_i \leq 0$ \quad for $1 \leq i \leq d$, 
\item $\displaystyle c_j-1 < \sum_{e_{\ell'} \in Y_{C^-_j}} y_{\ell'}-\sum_{e_\ell \in Y_{C^+_j}} y_\ell \leq c_j$ \quad for $1 \leq j \leq n-d$. 
\end{itemize}
Then, by Lemma~\ref{conic_characterization}, we see that  $\widetilde{F}_\bfc$ corresponds to the conic divisorial ideal $T(0,\cdots,0,c_1,\cdots,c_{n-d})$. 
We will denote the closure of $\widetilde{F}_\bfc$ by $F_\bfc$.
\end{observation}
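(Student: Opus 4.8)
The plan is to establish the three assertions that are bundled into this observation: that the coordinate change $\phi$ is unimodular, that each non-tree edge $e_{d+j}$ determines a unique cycle $C_j$ giving the displayed linear relation, and finally that the region $\widetilde{F}_\bfc$ so described is exactly the cell of Lemma~\ref{conic_characterization} attached to the divisorial ideal $T(0,\dots,0,c_1,\dots,c_{n-d})$. I would treat these in order, since the later steps use the earlier ones.

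For unimodularity, the matrix of $\phi$ in the coordinates $(x_0,\dots,x_{d-1})$ and $(y_1,\dots,y_d)$ is, up to a sign per row, the reduced vertex–edge incidence matrix of the spanning tree $\{e_1,\dots,e_d\}$ of the Hasse diagram of $\widehat{P}$: the row of $e_k=\{p_i,p_j\}$ with $p_i\prec p_j$ carries a $+1$ in column $i$ and a $-1$ in column $j$, and only a $+1$ in column $i$ when $j=d$, because the coordinate of $\hat{1}=p_d$ is suppressed. I would prove $\det\phi=\pm1$ by induction on $d$: a tree on at least two vertices has a leaf $p_v\neq p_d$, whose column is $\pm$ a standard basis vector, and Laplace expansion along that column reduces to the tree with $p_v$ deleted; alternatively one cites the classical unimodularity of the reduced incidence matrix of a tree. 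This licenses working in the $y$-coordinates and shows that $\phi$ maps $\sfM=\ZZ^d$ isomorphically onto $\ZZ^d$, so that the cube $(-1,0]^d$ in the $y$-coordinates is still a fundamental domain.

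Next, adjoining the non-tree edge $e_{d+j}$ to the spanning tree creates exactly one cycle $C_j$, all of whose remaining edges are tree edges; this is the standard fundamental-cycle fact and is precisely the condition $Z_{C_j}^+\cup Z_{C_j}^-=\{e_{d+j}\}$. Orienting $C_j=(p_{k_1},\dots,p_{k_m})$ so that the closing edge $\{p_{k_m},p_{k_1}\}=e_{d+j}$ satisfies $p_{k_m}\prec p_{k_1}$ places $e_{d+j}$ in $Z_{C_j}^+$ and leaves $Z_{C_j}^-=\emptyset$. The linear relation then follows from the telescoping identity $\sum_{i=1}^m(x_{k_i}-x_{k_{i+1}})=0$ (indices cyclic), which holds identically and survives the substitution $x_d=0$ that grounds $\hat{1}$. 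Matching each summand $x_{k_i}-x_{k_{i+1}}$ with $\pm\sigma_e(\bfx)$ according to whether the traversed edge lies in $X_{C_j}^+$ or $X_{C_j}^-$ yields $0=\sigma_{e_{d+j}}(\bfx)+\sum_{e_\ell\in Y_{C_j}^+}\sigma_{e_\ell}(\bfx)-\sum_{e_{\ell'}\in Y_{C_j}^-}\sigma_{e_{\ell'}}(\bfx)$, and replacing $\sigma_{e_\ell}(\bfx)$ by $y_\ell$ on tree edges gives the displayed formula; in particular $\sigma_{e_{d+j}}(\bfx)=\sum_{e_{\ell'}\in Y_{C_j}^-}y_{\ell'}-\sum_{e_\ell\in Y_{C_j}^+}y_\ell$.

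Finally I would assemble the cell. Set $a_i=0$ for $1\le i\le d$ and $a_{d+j}=c_j$ for $1\le j\le n-d$, so $\bfa=(0,\dots,0,c_1,\dots,c_{n-d})$. For the tree edges, $L_{i,0}=\{-1<\sigma_{e_i}(\bfx)\le 0\}=\{-1<y_i\le 0\}$, which is the cube condition; for the non-tree edges, substituting the relation above into $L_{d+j,c_j}=\{c_j-1<\sigma_{e_{d+j}}(\bfx)\le c_j\}$ gives exactly $c_j-1<\sum_{e_{\ell'}\in Y_{C_j}^-}y_{\ell'}-\sum_{e_\ell\in Y_{C_j}^+}y_\ell\le c_j$. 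Hence $\widetilde{F}_\bfc=\bigcap_{i=1}^n L_{i,a_i}$, and Lemma~\ref{conic_characterization} identifies this full-dimensional cell with $T(\bfa)=T(0,\dots,0,c_1,\dots,c_{n-d})$. The main obstacle is not any single step but the sign bookkeeping: one must check that the chosen orientation of $C_j$ is compatible both with the conventions defining $X_{C_j}^\pm$ and with the round-up direction built into $L_{i,a_i}$, and verify that passing through $\hat{1}=p_d$, where a coordinate is suppressed, does not disturb the telescoping. Both are routine once the conventions are pinned down.
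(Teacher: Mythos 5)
Your proposal is correct and follows essentially the same route as the paper's own (sketched) argument: unimodularity of $\phi$ as the reduced incidence matrix of the spanning tree, the fundamental-cycle relation expressing $\sigma_{e_{d+j}}$ in the tree coordinates $y_1,\dots,y_d$, and the identification of the resulting cell with $T(0,\dots,0,c_1,\dots,c_{n-d})$ via Lemma~\ref{conic_characterization}. The details you supply (leaf induction for the determinant, the telescoping identity with the grounding convention $x_d=0$, and the sign bookkeeping placing $e_{d+j}$ in $Z_{C_j}^+$ with $Z_{C_j}^-=\emptyset$) are precisely the ones the paper defers to \cite{HiN}.
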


Combining this observation and Theorem~\ref{Thm_Fsig_volume}, 
we have the following corollary, which enables us to compute the generalized $F$-signature for Hibi rings. 

\begin{corollary}
\label{Thm_Fsig_volume_Hibi}
Let the notation be the same as Settings~\ref{basic_setting}. 
Let $M_\bfc$ be the conic divisorial ideal represented by $\bfc=(c_1,\cdots,c_{n-d}) \in \calC(P) \cap \ZZ^{n-d}$, 
that is, $M_\bfc=T(0,\cdots,0,c_1,\cdots,c_{n-d})$. 
Then, the generalized $F$-signature $s(M_\bfc,\kk[P])$ of $M_\bfc$ is equal to the volume of $F_\bfc$. 
\end{corollary}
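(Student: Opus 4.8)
The plan is to combine Theorem~\ref{Thm_Fsig_volume} with the one-to-one correspondence of Theorem~\ref{conic_Hibi}, the only real work being to verify that $F_\bfc$ is the \emph{unique} cell of the decomposition of $(-1,0]^d$ attached to $M_\bfc$. By Theorem~\ref{Thm_Fsig_volume}, the generalized $F$-signature $s(M_\bfc,\kk[P])$ equals the sum of the volumes of all full-dimensional cells $\bigcap_{i=1}^n L_{i,a_i}$ of the decomposition of $(-1,0]^d$ for which $T(a_1,\dots,a_n)\cong M_\bfc$. Hence it suffices to show that $\widetilde{F}_\bfc$ is the only such cell, for then the sum collapses to the single term $\vol(\widetilde{F}_\bfc)=\vol(F_\bfc)$, the boundary $F_\bfc\setminus\widetilde{F}_\bfc$ being a null set.

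First I would pass to the coordinates $(y_1,\dots,y_d)=(\sigma_{e_1}(\bfx),\dots,\sigma_{e_d}(\bfx))$ supplied by the unimodular map $\phi$ of Observation~\ref{obs_fulldim_cell}, and claim that every full-dimensional cell of the decomposition of $(-1,0]^d$ has the form $\widetilde{F}_{\bfc'}$ for a unique $\bfc'\in\calC(P)\cap\ZZ^{n-d}$. Indeed, such a cell is $\bigcap_{i=1}^n L_{i,a_i}$ for some $\bfa\in\ZZ^n$; for each spanning-tree edge $e_i$ with $1\le i\le d$ the slab $L_{i,a_i}=\{a_i-1<y_i\le a_i\}$ must meet the cube $\{-1<y_i\le 0\}$ in positive measure, and since $a_i\in\ZZ$ this forces $a_i=0$. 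Thus $\bfa=(0,\dots,0,a_{d+1},\dots,a_n)$, so the cell is exactly $\widetilde{F}_{\bfc'}$ with $\bfc'=(a_{d+1},\dots,a_n)$; non-emptiness together with full-dimensionality then places $\bfc'$ in $\calC(P)\cap\ZZ^{n-d}$ by (the proof of) Theorem~\ref{conic_Hibi}. By Observation~\ref{obs_fulldim_cell} each $\widetilde{F}_{\bfc'}$ is attached to the conic ideal $M_{\bfc'}$, so the assignment $\bfc'\mapsto M_{\bfc'}$ is precisely the map $\calC(P)\cap\ZZ^{n-d}\to\calC(\kk[P])$ of Theorem~\ref{conic_Hibi}, which is a bijection. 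Consequently $M_{\bfc'}\cong M_\bfc$ holds if and only if $\bfc'=\bfc$, so $\widetilde{F}_\bfc$ is the unique cell contributing to the sum, giving $s(M_\bfc,\kk[P])=\vol(F_\bfc)$.

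A minor point to keep clean is the compatibility of Theorem~\ref{Thm_Fsig_volume} with this change of coordinates. Since translating $\bfx$ by a lattice vector of $\sfM$ alters $T(\ulcorner\sigma(\bfx)\urcorner)$ only by a principal divisor, the set of points whose associated conic ideal is isomorphic to $M_\bfc$ is $\sfM$-periodic, so the sum in Theorem~\ref{Thm_Fsig_volume} computes the same number over \emph{any} fundamental domain for $\sfM$. As $\phi$ is unimodular, hence a volume-preserving lattice automorphism, the cube $(-1,0]^d$ read in the $y$-coordinates is again such a fundamental domain, which legitimises working with the cells $\widetilde{F}_{\bfc'}$ throughout.

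I expect the genuine obstacle to be exactly the uniqueness step. In a general toric ring several distinct cells of $(-1,0]^d$ become identified in the class group and therefore contribute to the \emph{same} conic class, which is precisely why Theorem~\ref{Thm_Fsig_volume} is stated as a sum over cells rather than as a single volume; without further input one obtains only $\vol(F_\bfc)\le s(M_\bfc,\kk[P])$. The feature special to Hibi rings that removes all such multiplicities is the honest bijection of Theorem~\ref{conic_Hibi} between $\calC(P)\cap\ZZ^{n-d}$ and the conic classes, and it is this injectivity that does the decisive work in turning the sum into the equality asserted in the corollary.
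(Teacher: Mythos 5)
Your proof is correct and takes essentially the same route as the paper, which derives the corollary precisely by combining Theorem~\ref{Thm_Fsig_volume} with Observation~\ref{obs_fulldim_cell}, the collapse of the sum to the single volume of $F_\bfc$ resting on the one-to-one correspondence of Theorem~\ref{conic_Hibi}. Your explicit verification of the uniqueness of the contributing cell (forcing $a_i=0$ on the spanning-tree coordinates) and of the fundamental-domain compatibility under the unimodular map $\phi$ simply spells out what the paper leaves implicit in its one-line deduction.
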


This would be useful when we compute the generalized $F$-signatures for each concrete Hibi ring. 

\begin{example}
\label{ex_typeN_conic}
Let the notation be the same as Example~\ref{ex_typeN}. 
We consider a conic divisorial ideal $M_\bfc$ corresponding to $\bfc\in\calC(P)\cap\ZZ^2=\{(0,0), \pm(1,0), \pm(0,1), \pm(1,1), \pm(2,1)\}$. 
Computing the volume of $F_\bfc$ (e.g., use the software {\tt Normaliz} \cite{BIS}), we have the generalized $F$-signature $s(M_\bfc,\kk[P])$. 
We will denote $s(M_\bfc)=s(M_\bfc,\kk[P])$ for simplicity, and 
we note that $s(M_\bfc)=s(M_{(-\bfc)})$ by Proposition~\ref{prop_genFsig}. 
\begin{align*}
&s(M_{(0,0)})=s(\kk[P])=\frac{54}{120}, \quad s(M_{(1,0)})=s(M_{(-1,0)})=\frac{13}{120}, \quad s(M_{(1,1)})=s(M_{(-1,-1)})=\frac{13}{120}, \\
&s(M_{(0,1)})=s(M_{(0,-1)})=\frac{6}{120}, \quad s(M_{(2,1)})=s(M_{(-2,-1)})=\frac{1}{120}. 
\end{align*}
Here, we do not reduce fractions, because the denominators of these descriptions are important in the next section. 
\end{example}

\section{Another description of generalized $F$-signatures}
\label{sec_Fsig_Segre}

As we saw in the previous section, we can determine the generalized $F$-signatures of Hibi rings as the volume of a certain full-dimensional cell in semi-open cube $(-1,0]^d$(see Corollary~\ref{Thm_Fsig_volume_Hibi}). 
In this section, we give another description of the generalized $F$-signatures using the elements of symmetric groups. 
That is, we will show that the generalized $F$-signature can be computed by counting certain elements in a symmetric group. 

\subsection{Description of generalized $F$-signatures in terms of symmetric groups}
\label{subsec_Fsig_symmetricgrp}

In this subsection, we explain how to determine the value of the generalized $F$-signatures for Hibi rings using a symmetric group. 
Although the ideas shown in this subsection are valid for any Hibi ring (see Section~\ref{sec_Fsig_remark}), 
we will focus on a special class of Hibi rings for clarifying our arguments. 
Precisely, we consider the Segre product of polynomial rings, which is a Hibi ring as we see below. 

\begin{example}[{see \cite[Example~2.6]{HiN}}]
\label{segre}
Let $R$ be the Segre product $\#_{i=1}^t \kk[x_{i,1},\cdots,x_{i,r_i+1}]$ of polynomial rings, 
which is the ring generated by monomials with the form $x_{1,j_1}x_{2,j_2}\cdots x_{t,j_t}$ where $i=1,\cdots,t$ and $j_i=1,\cdots,r_i+1$. 
We can easily check that $R$ is a Hibi ring associated with the poset whose Hasse diagram is Figure~\ref{segre_poset}. 
In this case, we have that $\dim R=\sum^t_{i=1}r_i+1\eqqcolon d$ and the class group $\Cl(R)$ is isomorphic to $\ZZ^{t-1}$. 
We take a spanning tree $\{e_{i,j} \mid 1\le i\le t, \,\, 1\le j\le r_i\}\cup\{e_{t,0}\}$ (the red edges in Figure~\ref{segre_poset}), 
and consider the circuits $\{p_{i,1},\cdots,p_{i,r_i},p_d,p_{j,r_j},\cdots,p_{j,1},p_0\}$ for $1 \leq i < j \leq t$.
Then, we have the convex polytope $\calC(P)$ given in \eqref{ccp} as follows. 
\begin{align}\label{segre_ineq}
\calC(P)=\{(z_1,\cdots,z_{t-1}) \in \RR^{t-1} \mid &-r_t \leq z_i \leq r_i \text{ for }1 \leq i \leq t-1, \\
& -r_j \leq z_i-z_j \leq r_i \text{ for }1 \leq i < j \leq t-1\}. \nonumber
\end{align}
Let $M_\bfc$ be a divisorial ideal corresponding to $\bfc=(c_1,\cdots,c_{t-1})\in\Cl(R)$. 
Then, we have that $M_\bfc$ is conic if and only if $\bfc\in\calC(P)\cap \Cl(R)$ by Theorem~\ref{conic_Hibi}. 
Moreover, by Observation~\ref{obs_fulldim_cell} 
the closure $F_\bfc$ of the full-dimensional cell $\widetilde{F}_\bfc\subset\RR^d$
corresponding to each point $\bfc=(c_1,\cdots,c_{t-1}) \in \calC(P) \cap \Cl(R)=\calC(P)\cap \ZZ^{t-1}$ is given by 
\begin{align}\label{segre_cell}
\bigg\{(y',y_{i,j})_{\small\substack{1 \leq i \leq t\\ 1 \leq j \leq r_i}} \in \RR^d \mid &-1 \leq y' \leq 0, \; -1 \leq y_{i,j} \leq 0 
\hspace{5pt}\text{ for }1 \leq i \leq t\text{ and } 1\leq j \leq r_i,  \\
&c_i-1 \leq y'+\sum_{j=1}^{r_t}y_{t,j}-\sum_{j'=1}^{r_i}y_{i,j'} \leq c_i \hspace{5pt}\text{ for }1 \leq i \leq t-1\bigg\}.\nonumber
\end{align}

\begin{center}
\begin{figure}[H]
\newcommand{\edgewidth}{0.07cm} 
\newcommand{\nodewidth}{0.06cm} 
\newcommand{\noderad}{0.2} 
\newcommand{\lettersize}{\large} 
{\scalebox{0.6}{
\begin{tikzpicture}
\coordinate (Min) at (4.5,-2.5); \coordinate (Max) at (4.5,8);
\coordinate (N11) at (0,0); \coordinate (N12) at (0,1.5); 
\coordinate (N14) at (0,4); \coordinate (N15) at (0,5.5);
\coordinate (N21) at (3,0); \coordinate (N22) at (3,1.5); 
\coordinate (N24) at (3,4); \coordinate (N25) at (3,5.5);
\coordinate (Ntt1) at (6,0); \coordinate (Ntt2) at (6,1.5); 
\coordinate (Ntt4) at (6,4); \coordinate (Ntt5) at (6,5.5);
\coordinate (Nt1) at (9,0); \coordinate (Nt2) at (9,1.5); 
\coordinate (Nt4) at (9,4); \coordinate (Nt5) at (9,5.5);

\draw[line width=\edgewidth]  (N11)--(Min) node[midway,xshift=0cm,yshift=-0.4cm] {\Large $e_{1,0}$};
\draw[line width=\edgewidth, red]  (N11)--(N12) node[midway,xshift=-0.5cm,red] {\Large $e_{1,1}$}; 
\draw[line width=\edgewidth, red]  (N12)--(0,2) ; \draw[line width=\edgewidth, red]  (0,3.5)--(N14); 
\draw[line width=\edgewidth, loosely dotted, red]  (0,2.3)--(0,3.2);
\draw[line width=\edgewidth, red]  (N14)--(N15) node[midway,xshift=-0.65cm, red] {\Large $e_{1,r_1-1}$}; 
\draw[line width=\edgewidth, red]  (N15)--(Max) node[midway,xshift=-0.3cm,yshift=0.4cm, red] {\Large $e_{1,r_1}$};

\draw[line width=\edgewidth]  (N21)--(Min) node[midway,xshift=-0.6cm,yshift=0.3cm] {\Large $e_{2,0}$};
\draw[line width=\edgewidth, red]  (N21)--(N22) node[midway,xshift=-0.5cm, red] {\Large $e_{2,1}$}; 
\draw[line width=\edgewidth, red]  (N22)--(3,2); \draw[line width=\edgewidth, red]  (3,3.5)--(N24); 
\draw[line width=\edgewidth, loosely dotted, red]  (3,2.3)--(3,3.2);
\draw[line width=\edgewidth, red]  (N24)--(N25) node[midway,xshift=-0.65cm, red] {\Large $e_{2,r_2-1}$}; 
\draw[line width=\edgewidth, red]  (N25)--(Max) node[midway,xshift=-0.9cm,yshift=-0.5cm, red] {\Large $e_{2,r_2}$}; 

\draw[line width=\edgewidth]  (Ntt1)--(Min) node[midway,xshift=0.8cm,yshift=0.3cm] {\Large $e_{t-1,0}$};
\draw[line width=\edgewidth, red]  (Ntt1)--(Ntt2) node[midway,xshift=0.65cm, red] {\Large $e_{t-1,1}$}; 
\draw[line width=\edgewidth, red]  (Ntt2)--(6,2); \draw[line width=\edgewidth, red]  (6,3.5)--(Ntt4); 
\draw[line width=\edgewidth, loosely dotted, red]  (6,2.3)--(6,3.2);
\draw[line width=\edgewidth, red]  (Ntt4)--(Ntt5) node[midway,xshift=0.98cm, red] {\Large $e_{t-1,r_{t-1}-1}$}; 
\draw[line width=\edgewidth, red]  (Ntt5)--(Max) node[midway,xshift=1.1cm,yshift=-0.5cm, red] {\Large $e_{t-1,r_{t-1}}$};

\draw[line width=\edgewidth, red]  (Nt1)--(Min) node[midway,xshift=0cm,yshift=-0.4cm, red] {\Large $e_{t,0}$}; 
\draw[line width=\edgewidth, red]  (Nt1)--(Nt2) node[midway,xshift=0.45cm, red] {\Large $e_{t,1}$}; 
\draw[line width=\edgewidth, red]  (Nt2)--(9,2); \draw[line width=\edgewidth, red]  (9,3.5)--(Nt4); 
\draw[line width=\edgewidth, loosely dotted, red]  (9,2.3)--(9,3.2);
\draw[line width=\edgewidth, red]  (Nt4)--(Nt5) node[midway,xshift=0.65cm, red] {\Large $e_{t,r_t-1}$}; 
\draw[line width=\edgewidth, red]  (Nt5)--(Max) node[midway,xshift=0.2cm,yshift=0.4cm, red] {\Large $e_{t,r_t}$}; 

\draw[line width=\edgewidth, loosely dotted]  (4,2.75)--(5,2.75); 

\draw [line width=\nodewidth, fill=lightgray] (Min) circle [radius=\noderad] ; 
\draw [line width=\nodewidth, fill=lightgray] (Max) circle [radius=\noderad] ; 
\draw [line width=\nodewidth, fill=white] (N11) circle [radius=\noderad] ; \draw [line width=\nodewidth, fill=white] (N12) circle [radius=\noderad] ;
\draw [line width=\nodewidth, fill=white] (N14) circle [radius=\noderad] ;\draw [line width=\nodewidth, fill=white] (N15) circle [radius=\noderad] ;
\draw [line width=\nodewidth, fill=white] (N21) circle [radius=\noderad] ; \draw [line width=\nodewidth, fill=white] (N22) circle [radius=\noderad] ;
\draw [line width=\nodewidth, fill=white] (N24) circle [radius=\noderad] ;\draw [line width=\nodewidth, fill=white] (N25) circle [radius=\noderad] ;
\draw [line width=\nodewidth, fill=white] (Nt1) circle [radius=\noderad] ; \draw [line width=\nodewidth, fill=white] (Nt2) circle [radius=\noderad] ;
\draw [line width=\nodewidth, fill=white] (Nt4) circle [radius=\noderad] ;\draw [line width=\nodewidth, fill=white] (Nt5) circle [radius=\noderad] ;
\draw [line width=\nodewidth, fill=white] (Ntt1) circle [radius=\noderad] ; \draw [line width=\nodewidth, fill=white] (Ntt2) circle [radius=\noderad] ;
\draw [line width=\nodewidth, fill=white] (Ntt4) circle [radius=\noderad] ;\draw [line width=\nodewidth, fill=white] (Ntt5) circle [radius=\noderad] ;

\node at (4.5,-3.1) {\lettersize$p_0=\widehat{0}$}; \node at (4.5,8.6) {\lettersize$p_d=\widehat{1}$}; 
\node at (0.7,0) {\lettersize $p_{1,1}$}; \node at (0.7,1.5) {\lettersize $p_{1,2}$}; 
\node at (0.8,5.5) {\lettersize $p_{1,r_1}$}; 
\node at (3.7,0) {\lettersize $p_{2,1}$}; \node at (3.7,1.5) {\lettersize $p_{2,2}$}; 
\node at (3.8,5.5) {\lettersize $p_{2,r_2}$}; 
\node at (5.2,0) {\lettersize $p_{t-1,1}$}; \node at (5.2,1.5) {\lettersize $p_{t-1,2}$}; 
\node at (5.1,5.5) {\lettersize $p_{t-1,r_{t-1}}$}; 

\node at (8.4,0) {\lettersize $p_{t,1}$}; \node at (8.4,1.5) {\lettersize $p_{t,2}$}; 
\node at (8.35,5.5) {\lettersize $p_{t,r_t}$}; 

\end{tikzpicture}
} }
\caption{The Hasse diagram for the Segre product of polynomial rings}
\label{segre_poset}
\end{figure}
\end{center}
\end{example}

Then we will recall the ideas developed by Lam and Postnikov \cite{LP}. 
Let $z_0=0$ and let 
\begin{align}\label{hypersimplex}
\tilde{\Delta}_{k,d}=\{(z_1,\cdots,z_{d-1}) \in \RR^{d-1} \mid 0 \leq z_i-z_{i-1} \leq 1 \text{ for }i=1,\cdots,d-1, \; k-1 \leq z_{d-1} - z_0 \leq k\}.
\end{align}
This polytope is called the \emph{hypersimplex} (see \cite[the equation (1)]{LP}). 

We say that a lattice polytope $P \subset \RR^d$ is an \emph{alcoved polytope} 
if it is a lattice polytope defined by the hyperplanes all of which are of the form $b_{ij} \leq z_j - z_i \leq c_{ij}$ for $0 \leq i < j \leq d-1$, 
where $b_{ij}$ (resp. $c_{ij}$) might be ``$-\infty$'' (resp. ``$\infty$''), and $b_{ij}=c_{ij}$ (i.e., $z_j-z_i=b_{ij}$) is also fine. 
A typical example of alcoved polytopes is the hypersimplex $\tilde{\Delta}_{k,d}$ above. 

Let $\Sf_d$ be the set of permutations of $d$ integers $\{1,\cdots,d\}$, i.e., $\Sf_d$ is the symmetric group. 
We use one-line notation, i.e., $w_1 w_2 \cdots w_d \in \Sf_d$ stands for the permutation $1 \mapsto w_1, 2 \mapsto w_2, \cdots, d \mapsto w_d$. 
Recall that a \emph{descent} in a permutation $w=w_1\cdots w_d \in \Sf_d$ is an index $i \in \{1,\cdots,d-1\}$ such that $w_i>w_{i+1}$. 
For $w \in \Sf_d$, let $$\des(w)=\big|\{i \in \{1,\cdots,d-1\} \mid \text{$i$ is a descent in $w$}\}\big|.$$ 
We also define $\des(w_iw_{i+1} \cdots w_j)$ for a consecutive subsequence of $w=w_1\cdots w_d \in \Sf_d$ by the similar way, i.e., 
$\des(w_iw_{i+1} \cdots w_j)=\big|\{\ell \in \{i,i+1,\cdots,j-1\} \mid \text{$\ell$ is a descent in $w$}\}\big|$. 

The \emph{Eulerian number} $A_{k,d}$ is defined by $$A_{k,d}=\big|\{w \in \Sf_d : \des(w)=k\}\big|.$$ 
It is known \cite{Sta77} that 
\begin{align}\label{vol_hyp}
\text{the volume of the hypersimplex $\tilde{\Delta}_{k,d}$ is equal to $\displaystyle \frac{A_{k,d-1}}{(d-1)!}$.} 
\end{align}
For more precise information on Eulerian numbers, please refer \cite[Section 1.3]{Sta86}. 

We consider an alcoved polytope $P \subset \RR^d$ which lies within a hypersimplex $\tilde{\Delta}_{k,d+1}$. More precisely, let $P$ be defined as follows: 
\begin{align*}
P=\{(z_1,\cdots,z_d) \in \RR^d \mid \; &0 \leq z_i-z_{i-1} \leq 1 \text{ for }1 \leq i \leq d, \; k-1 \leq z_d-z_0 \leq k, \\
&b_{ij} \leq z_i-z_j \leq c_{ij} \text{ for }1 \leq i < j \leq d\}, 
\end{align*}
where $z_0=0$ and $b_{ij},c_{ij} \in \ZZ$ are parameters given for each pair $(i,j)$ with $0 \leq i< j \leq d$. 
Let $W_P \subset \Sf_d$ be the set of permutations $w=w_1w_2 \cdots w_d \in \Sf_d$ satisfying all of the following conditions: 
\begin{itemize}
\item $\des(w)=k-1$; 
\item $\des(w_iw_{i+1} \cdots w_j) \geq b_{ij}$, and moreover, if $\des(w_iw_{i+1} \cdots w_j) = b_{ij}$ then $w_i<w_j$ holds; 
\item $\des(w_iw_{i+1} \cdots w_j) \leq c_{ij}$, and moreover, if $\des(w_iw_{i+1} \cdots w_j) = c_{ij}$ then $w_i>w_j$ holds, 
\end{itemize}
where we let $w_0=0$. 

\begin{proposition}[{\cite[Proposition 6.1]{LP}}]
\label{key_prop}
The volume of $P$ is equal to $\displaystyle \frac{|W_P|}{d!}$. 
\end{proposition}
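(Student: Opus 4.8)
The plan is to realize $P$ as a union of unit simplices of the affine Coxeter arrangement of type $A_{d-1}$ and to count those simplices by permutations. Work in the space $\{(z_1,\dots,z_d)\}$ with $z_0=0$, and recall that the arrangement consists of the hyperplanes $z_i-z_j\in\ZZ$ for $0\le i<j\le d$, whose closed chambers (\emph{alcoves}) are lattice simplices. Since every facet of $P$ lies on one of these hyperplanes (this is exactly what it means for $P$ to be alcoved), $P$ is a union of closed alcoves meeting only along faces. The first step is to check that each alcove in the relevant region has Euclidean volume $1/d!$, so that $\vol(P)=\#\{\text{alcoves contained in }P\}/d!$; this reduces the statement to a bijection between $\{\text{alcoves}\subseteq P\}$ and $W_P$.

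Second, I would label alcoves by permutations following Stanley's triangulation of the hypersimplex \cite{Sta77}. Passing to the unimodular coordinates $u_i=z_i-z_{i-1}$, the region $0\le z_i-z_{i-1}\le 1$ becomes the unit cube $[0,1]^d$, a fundamental parallelepiped that is a union of exactly $d!$ alcoves; hence each alcove has volume $1/d!$, as needed in the first step. To a generic point I attach the permutation $w=w_1\cdots w_d\in\Sf_d$ recording the relative order of the fractional parts of $z_1,\dots,z_d$. This order changes precisely across the hyperplanes $z_i-z_j\in\ZZ$, so $w$ is constant on each alcove and gives a bijection between the $d!$ alcoves and $\Sf_d$. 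The key computation here is the identity $\lfloor z_j\rfloor-\lfloor z_i\rfloor=\des(w_iw_{i+1}\cdots w_j)$ on the alcove $A_w$ for all $1\le i<j\le d$ (and $\lfloor z_d\rfloor=\des(w)$ for the layer coordinate): each unit increment of $\lfloor z_\ell\rfloor$ corresponds to a step $u_\ell$ crossing an integer, which happens exactly at a descent of $w$. In particular the layer condition $k-1\le z_d\le k$ selects the alcoves with $\des(w)=k-1$, already recovering the base case $P=\tilde\Delta_{k,d+1}$ and the volume formula \eqref{vol_hyp}.

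Third, which is the heart of the argument, I would translate the remaining facet inequalities $b_{ij}\le z_i-z_j\le c_{ij}$ into conditions on $w$. From the identity above one has, on $A_w$, $z_j-z_i=\des(w_iw_{i+1}\cdots w_j)+\theta_{ij}$, where $\theta_{ij}$ is the signed difference of the fractional parts of $z_j$ and $z_i$, with $\theta_{ij}>0$ if and only if $w_j>w_i$. Hence $z_i-z_j$ ranges over a fixed half-open unit interval on $A_w$ whose bounding integers are governed by $\des(w_iw_{i+1}\cdots w_j)$, and the side from which the closed alcove $\overline{A_w}$ meets each bounding hyperplane is dictated by the comparison $w_i\lessgtr w_j$. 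Comparing this with the inequality $b_{ij}\le z_i-z_j\le c_{ij}$ (after fixing the normalization of the labelling $w\mapsto A_w$) shows that the two facets attached to the pair $(i,j)$ translate into the two descent inequalities on the subword $w_iw_{i+1}\cdots w_j$ appearing in the definition of $W_P$, the boundary cases being governed exactly by the tie-breaks $w_i<w_j$ and $w_i>w_j$. Intersecting over all pairs $(i,j)$ together with the layer condition yields $A_w\subseteq P\iff w\in W_P$, and combined with the first step this gives $\vol(P)=|W_P|/d!$.

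The main obstacle I anticipate is this third step: making the correspondence between the linear form $z_i-z_j$ and the descent statistic of $w_iw_{i+1}\cdots w_j$ fully rigorous, and in particular getting the boundary bookkeeping right so that the tie-breaking rules $w_i<w_j$ and $w_i>w_j$ capture precisely those alcoves whose closures meet a defining hyperplane from the inside. A subsidiary care point is pinning down the orientation convention in the labelling $w\mapsto A_w$ so that the lower and upper facets align with the stated tie-breaks rather than their reflections, and verifying that passing to the cube $[0,1]^d$ in the second step removes any half-open ambiguity, so that the count of alcoves is exact and not merely correct up to a measure-zero boundary.
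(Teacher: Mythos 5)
The paper offers no proof of this proposition --- it is quoted from Lam--Postnikov \cite[Proposition~6.1]{LP} and used as a black box --- so the relevant comparison is with the proof in that reference, and your sketch is essentially that argument: decompose the alcoved polytope into closed alcoves, label the $d!$ alcoves of the fundamental region $0 \leq z_i - z_{i-1} \leq 1$ by the orderings of the fractional parts of $z_0=0,z_1,\ldots,z_d$, and convert the facet slabs into descent conditions. Your third step, the one you were worried about, is sound exactly as you set it up: on the alcove $A_w$ one has $\lfloor z_j \rfloor - \lfloor z_i \rfloor = \des(w_iw_{i+1}\cdots w_j)$, because each unit jump of the floor along $z_i, z_{i+1}, \ldots, z_j$ occurs precisely at a descent (as $0 < z_\ell - z_{\ell-1} < 1$); hence $z_j - z_i$ fills the open unit interval above $\des(w_i\cdots w_j)$ when $w_i < w_j$ and below it when $w_i > w_j$, so containment of $\overline{A_w}$ in a slab $b \leq z_j - z_i \leq c$ is equivalent to $b \leq \des(w_i\cdots w_j) \leq c$ together with precisely the stated tie-breaks, and summing $1/d!$ over such $w$ gives the claim. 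Two remarks. First, the orientation ambiguity you flag resolves in your favor: your pairing of the slab for $z_j - z_i$ (larger index minus smaller) with $\des(w_i\cdots w_j)$ is exactly the convention under which the paper applies the proposition --- the extra inequalities \eqref{ineq_extra} in the proof of Theorem~\ref{S_n} all bound $z_a - z_0$ and $z_d - z_b$ --- so the ``$z_i - z_j$'' appearing in the statement should be read with that orientation. Second, your first step invokes ``each alcove has volume $1/d!$'' before the count of $d!$ alcoves in the unit-volume region is available; to avoid the circularity, either note first that the affine Weyl group acts transitively on alcoves by isometries, or establish the bijection with $\Sf_d$ first and then divide the total volume.
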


We will use this proposition to determine the generalized $F$-signatures for the Hibi ring given in Example~\ref{segre}. 
\begin{theorem}\label{S_n}
Let the notation be the same as Example~\ref{segre}, especially $d=\sum^t_{i=1}r_i+1$. 
Fix $(c_1,\cdots,c_{t-1}) \in \calC(P) \cap \ZZ^{t-1}$. Then, the generalized $F$-signature $s(M_\bfc)$ can be described as follows: 
$$s(M_\bfc)=\frac{1}{d!}\sum_{\alpha_1=0}^{r_1}\cdots\sum_{\alpha_{t-1}=0}^{r_{t-1}} |U(\alpha_1,\cdots,\alpha_{t-1})|,$$ 
where $U(\alpha_1,\cdots,\alpha_{t-1})$ is the set of the permutations $w=w_1\cdots w_d \in \Sf_d$ satisfying the following two conditions: 
\begin{itemize}
\item[(i)] $\displaystyle \des(w_1 \cdots w_{\sum_{q=i+1}^tr_q+1})=r_t-r_i+c_i+\sum_{\ell=i}^{t-1}\alpha_\ell$ for each $i=1,\cdots,t-1$; 
\item[(ii)] $\displaystyle \des(w_{\sum_{q=i}^t r_q} \cdots w_d) \in \left\{\sum_{\ell=1}^{i-1}\alpha_\ell, \sum_{\ell=1}^{i-1}\alpha_\ell+1\right\}$ for each $i=2,\cdots,t$, and 
\begin{itemize}
\item[*] if it is $\displaystyle \sum_{\ell=1}^{i-1}\alpha_\ell$ then $\displaystyle w_{\sum_{q=i}^t r_q} < w_d$ holds, or 
\item[*] if it is $\displaystyle \sum_{\ell=1}^{i-1}\alpha_\ell+1$ then $\displaystyle w_{\sum_{q=i}^t r_q} > w_d$ holds. 
\end{itemize}
\end{itemize}
\end{theorem}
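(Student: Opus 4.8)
The plan is to reduce the statement to a volume computation via Corollary~\ref{Thm_Fsig_volume_Hibi} and then evaluate that volume by the Lam--Postnikov count in Proposition~\ref{key_prop}. First I would invoke Corollary~\ref{Thm_Fsig_volume_Hibi} to replace $s(M_\bfc)$ by $\vol(F_\bfc)$, where $F_\bfc$ is the explicit cell written out in \eqref{segre_cell}. It then suffices to prove
\[
\vol(F_\bfc)=\frac{1}{d!}\sum_{\alpha_1=0}^{r_1}\cdots\sum_{\alpha_{t-1}=0}^{r_{t-1}}\bigl|U(\alpha_1,\cdots,\alpha_{t-1})\bigr|,
\]
so the whole problem becomes: realize $F_\bfc$ as an alcoved polytope (or a disjoint union of such) in the normal form required by Proposition~\ref{key_prop}, and read off its permutation count.

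The first obstacle is that the defining inequalities of $F_\bfc$ contain the mixed-sign forms $y'+\sum_j y_{t,j}-\sum_{j'}y_{i,j'}$, which are \emph{not} differences $z_a-z_b$ of consecutive-sum coordinates and hence do not place $F_\bfc$ in the alcoved normal form. To repair the signs I would pass to $u=-y'$ and $u_{i,j}=-y_{i,j}\in[0,1]$ and then reverse each subtracted block via $v_{i,j}=1-u_{i,j}$ for $1\le i\le t-1$; this substitution is volume-preserving and converts every inequality into one with nonnegative coefficients. Listing the variables as $[\,y',\text{block }t\,]$, then block $t-1$, \dots, then block $1$ and passing to consecutive-sum coordinates $z$, the cube constraints $-1\le y\le 0$ become the alcoved constraints $0\le z_p-z_{p-1}\le 1$, the block boundaries sit at the indices $m_i=1+\sum_{q>i}r_q$, and each reversed inequality becomes a relation among the boundary values $z_{m_{t-1}},z_{m_i},z_{m_{i-1}}$.

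The main obstacle is that the front part $z_{m_{t-1}}$ is common to all $t-1$ inequalities, so for $t\ge 3$ no single unimodular change of coordinates turns them simultaneously into honest differences: each reversed form reads $z_{m_{t-1}}+z_{m_{i-1}}-z_{m_i}$, which genuinely involves three boundary coordinates. I would resolve this by decomposing $F_\bfc$ into subpolytopes $F_\bfc^{(\alpha)}$ on which the integer levels of the prefix sums $z_{m_i}$ are prescribed by $\alpha=(\alpha_1,\cdots,\alpha_{t-1})$ with $0\le\alpha_i\le r_i$ (equivalently, the descent numbers of the prefix windows $w_1\cdots w_{m_i}$ are fixed). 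Prescribing these levels collapses the three-term couplings into two-term difference constraints, so that each $F_\bfc^{(\alpha)}$ becomes an alcoved polytope contained in a single hypersimplex and written in the normal form of Proposition~\ref{key_prop}; verifying that this decomposition really produces that normal form is where I expect the bulk of the work to lie.

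Finally I would apply Proposition~\ref{key_prop} to each $F_\bfc^{(\alpha)}$, obtaining $\vol(F_\bfc^{(\alpha)})=|U(\alpha)|/d!$ by identifying the associated set $W_{F_\bfc^{(\alpha)}}$ with $U(\alpha)$: the prefix-window descent equalities are precisely condition~(i), recording the levels $\alpha_i$ together with the constants $r_t-r_i+c_i$, while the residual width-one bounds on the block sums become the suffix-window conditions~(ii), whose strict/non-strict tie-breaks $w<w_d$ and $w>w_d$ come exactly from the semi-open boundaries of $(-1,0]^d$ together with the block reversal. Summing $\vol(F_\bfc^{(\alpha)})=|U(\alpha)|/d!$ over $\alpha$ then yields the claimed formula, the semi-open conventions ensuring that the pieces $F_\bfc^{(\alpha)}$ tile $F_\bfc$ without overlap. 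The two delicate points, which I expect to be the crux, are confirming that the reversal-plus-slicing really lands in the Lam--Postnikov normal form and that the descent-window inequalities and their tie-breaks match conditions~(i) and~(ii) exactly rather than merely up to a harmless recount.
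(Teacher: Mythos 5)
Your reduction to $\vol(F_\bfc)$ via Corollary~\ref{Thm_Fsig_volume_Hibi} and the plan to decompose $F_\bfc$ into $\prod_i(r_i+1)$ pieces and count permutations via Proposition~\ref{key_prop} is indeed the paper's strategy, but the step you yourself flag as the crux is a genuine gap, and it fails. After your sign flips $u_{i,j}=-y_{i,j}$, block reversals $v_{i,j}=1-u_{i,j}$ ($i\le t-1$), and passage to consecutive sums, the mixed constraint for block $i$ reads
\[
r_i-c_i\;\le\; z_{m_{t-1}}+z_{m_{i-1}}-z_{m_i}\;\le\; r_i-c_i+1,
\]
which is a three-term form for every $i\le t-2$. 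Intersecting with your slices $\gamma_j\le z_{m_j}\le\gamma_j+1$ does \emph{not} collapse this to a two-term difference: confining each of the three coordinates to a width-one window still lets the form vary over an interval of length three, so the width-one constraint on it cuts a genuine facet whose normal vector has three nonzero entries, and such a facet is not of alcoved type in these coordinates. Concretely, take $t=3$, $r_1=r_2=r_3=1$, $\bfc=(0,0)$, so that the constraints are the chain conditions, $1\le z_3\le 2$, and $1\le z_2+z_4-z_3\le 2$; on the slice $z_2\in[0,1]$, $z_3\in[1,2]$, $z_4\in[1,2]$, both $(z_1,\dots,z_4)=(0.5,0.6,1.1,1.2)$ and $(0.5,0.9,1.05,1.9)$ satisfy every chain and slice constraint, yet only the second satisfies the three-term one. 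Hence your pieces $F_\bfc^{(\alpha)}$ are not alcoved polytopes in the normal form of Proposition~\ref{key_prop}, that proposition cannot be applied to them, and no identification with $U(\alpha)$ can get off the ground. (Your reversal trick makes the union of block $t\cup\{y'\}$ and block $i$ contiguous for at most one $i$ in any fixed linear ordering of the variables, which is why this route does work for $t=2$ --- it is essentially the paper's proof of Proposition~\ref{compute_Fsig2} --- but not for $t\ge 3$.)

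The paper escapes this not by more careful slicing but by a different unimodular transformation chosen \emph{before} any slicing, see \eqref{eq_Fsig1}: $y'\mapsto X_{t,1}-X_0-1$ and $y_{i,j}\mapsto X_{i,j+1}-X_{i,j}-1$, where all blocks share the common endpoint $X_{i,r_i+1}=X_1$. Then $y'+\sum_j y_{t,j}$ and $\sum_{j'}y_{i,j'}$ both telescope to expressions ending in $X_1$, the $X_1$'s cancel in their difference, and each mixed constraint becomes the two-term window $r_t-r_i+c_i\le X_{i,1}-X_0\le r_t-r_i+c_i+1$ of \eqref{cell_part2} from the outset. The cost is that the cube constraints now form a star (all blocks running into $X_1$) rather than a path; the slicing \eqref{assume} by the inter-block gaps $-\alpha_i\le X_{i,1}-X_{i+1,r_{i+1}}\le-\alpha_i+1$, followed by the translation \eqref{eq_Fsig_translation}, is exactly what restores the chain conditions, after which every constraint in \eqref{cell_part3} is a two-term difference and Proposition~\ref{key_prop} applies. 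So the missing idea is the choice of coordinates that makes the mixed constraints two-term from the start; your decomposition parameter plays the role of the paper's $\alpha$, but the pieces you produce lie outside the class of polytopes that the Lam--Postnikov count covers.
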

\begin{proof}
By Corollary~\ref{Thm_Fsig_volume_Hibi} together with Observation~\ref{obs_fulldim_cell}, we see that $s(M_\bfc)$ is equal to 
the volume of $F_\bfc$ given in \eqref{segre_cell}. 
We apply the following unimodular transformation: let $X_0=0$ and let 
\begin{align}
\label{eq_Fsig1}
y' \mapsto X_{t,1}-X_0-1 \;\;\text{ and }\;\; y_{i,j} \mapsto X_{i,j+1}-X_{i,j}-1 
\end{align}
for $1 \leq i \leq t$ and $1 \leq j \leq r_i$, where $X_{i,r_i+1}\eqqcolon X_1$ for each $i$. Then the convex polytope $F_\bfc$ becomes as follows: 
\begin{align}\label{cell_part2}
F_\bfc \mapsto \bigg\{(X_{i,j},X_1)_{\small\substack{1 \leq i \leq t\\ 1 \leq j \leq r_i}} \in \RR^d \mid 
&\; 0 \leq X_{t,1}-X_0 \leq 1, \; 0 \leq X_{i,j+1}-X_{i,j} \leq 1 \text{ for }1 \leq i \leq t \text{ and } 1 \leq j \leq r_i \nonumber\\
&r_t-r_i+c_i \leq X_{i,1}-X_0 \leq r_t-r_i+c_i+1 \text{ for }1 \leq i \leq t-1\bigg\}.
\end{align}
It follows from the inequalities in \eqref{cell_part2} that 
$0 \leq \sum_{j=1}^{r_i}( X_{i,j+1}-X_{i,j}) = X_1-X_{i,1} \leq r_i$ and $0 \leq X_1 - X_{i+1,r_{i+1}} \leq 1$ hold, thus we have 
$-r_i \leq X_{i,1}-X_{i+1,r_{i+1}} \leq 1$ holds for each $1 \leq i \leq t-1$. 

Let $\calA=\{(a_1,\cdots,a_{t-1}) \in \ZZ^{t-1} \mid 0 \leq a_i \leq r_i\}$. 
For a given $(\alpha_1,\cdots,\alpha_{t-1}) \in \calA$, 
we consider the region in \eqref{cell_part2} satisfying the inequality 
\begin{align}\label{assume} 
-\alpha_i \leq X_{i,1} - X_{i+1,r_{i+1}} \leq -\alpha_i+1 
\end{align} 
holds for each $1 \leq i \leq t-1$. 
Now, we apply the following parallel translation: 
\begin{align}
\label{eq_Fsig_translation}
X_1 \mapsto z_1-\sum_{\ell=1}^{t-1}\alpha_\ell, \;&\quad
X_{i,j} \mapsto z_{i,j}-\sum_{\ell=i}^{t-1}\alpha_\ell \;\;\text{for each $i, j$},
\;\text{ and we let }\;X_0 = z_0, 
\end{align}
where we define $\sum_{\ell=i}^{t-1}\alpha_\ell=0$ if $i=t$. 
Then \eqref{cell_part2} equipped with \eqref{assume} becomes as follows: 
\begin{align}\label{cell_part3}
F_\bfc \mapsto \bigg\{(z_{i,j},z_1)_{\small\substack{1 \leq i \leq t\\ 1 \leq j \leq r_i}} \in \RR^d \mid 
&\; 0 \leq z_{t,1}-z_0 \leq 1, \; 0 \leq z_{i,j+1}-z_{i,j} \leq 1 \text{ for }1 \leq i \leq t \text{ and } 1 \leq j \leq r_i-1, \nonumber \\
&\; 0 \leq z_{i,1}-z_{i+1,r_{i+1}} \leq 1 \text{ for }1 \leq i \leq t-1, \; 0 \leq z_1 - z_{1,r_1} \leq 1, \\
&\; r_t-r_i+c_i+\sum_{\ell=i}^{t-1}\alpha_\ell \leq z_{i,1}-z_0 \leq r_t-r_i+c_i+\sum_{\ell=i}^{t-1} \alpha_\ell+1 \text{ for }1 \leq i \leq t-1, \nonumber \\
&\; \sum_{\ell=1}^{i-1} \alpha_\ell \leq z_1 - z_{i,r_i} \leq \sum_{\ell=1}^{i-1} \alpha_\ell+1 \text{ for }2 \leq i \leq t\bigg\}. \nonumber 
\end{align}
By renaming the indices $z_0,z_{t,1},z_{t,2},\cdots,z_{t,r_t},z_{t-1,1},\cdots,z_{t-1,r_{t-1}},\cdots,z_{1,r_1},z_1$ 
into $$z_0,z_1,z_2,\cdots,z_{r_t},z_{r_t+1},\cdots,z_{r_t+r_{t-1}},\cdots,z_{r_1+\cdots+r_t},z_d$$
in this order, we see that \eqref{cell_part3} is an alcoved polytope. 
Moreover, 
by the inequalities given in the first two rows of \eqref{cell_part3},
we also see that \eqref{cell_part3} is contained in $\bigcup_{k=1}^{d} \tilde{\Delta}_{k,d+1}$. 
Note that the inequalities 
\begin{equation}\label{ineq_extra}
\begin{split}
&r_t-r_i+c_i+\sum_{\ell=i}^{t-1}\alpha_\ell \leq z_{i,1}-z_0 \leq r_t-r_i+c_i+\sum_{\ell=i}^{t-1} \alpha_\ell+1 \text{ for }1 \leq i \leq t-1 \text{ and }\\
&\sum_{\ell=1}^{i-1}\alpha_\ell \leq z_1-z_{i,r_i}  \leq \sum_{\ell=1}^{i-1}\alpha_\ell+1 \text{ for }2 \leq i \leq t. 
\end{split}
\end{equation}
are extra ones, i.e., those are not contained in the definition of $\tilde{\Delta}_{k,d+1}$. 

For $(\alpha_1,\cdots,\alpha_{t-1}) \in \calA$, let $V(\alpha_1,\cdots,\alpha_{t-1})$ be the set of $w \in \Sf_d$ satisfying the following conditions: 
\begin{itemize}
\item for each $1 \leq i \leq t-1$, we have 
$$r_t-r_i+c_i+\sum_{\ell=i}^{t-1}\alpha_\ell \leq \des(w_0w_1 \cdots w_{\sum_{q=i+1}^tr_q + 1}) \leq r_t-r_i+c_i+\sum_{\ell=i}^{t-1}\alpha_\ell+1,$$ 
and $w_0<w_{\sum_{q=i+1}^tr_q + 1}$ holds if the left-most equality holds or $w_0>w_{\sum_{q=i+1}^tr_q + 1}$ holds if the right-most equality holds; 
\item for each $2 \leq i \leq t$, we have 
$$\sum_{\ell=1}^{i-1}\alpha_\ell \leq \des(w_{\sum_{q=i}^tr_q} \cdots w_{d-1}w_d) \leq \sum_{\ell=1}^{i-1}\alpha_\ell+1,$$ 
and $w_{\sum_{q=i}^tr_q} < w_d$ holds if the left-most equality holds or $w_{\sum_{q=i}^tr_q} > w_d$ holds if the right-most equality holds. 
\end{itemize}
We remark that $z_{i,1}$ is identified with $z_{\sum_{q=i+1}^tr_q + 1}$ and $z_{i,r_i}$ is identified with $z_{\sum_{q=i}^t r_q}$ 
by the above identification of indices. 
Since \eqref{cell_part3} is an alcoved polytope, we have that the volume of \eqref{cell_part3} is equal to 
$\displaystyle \frac{1}{d!}\sum_{(\alpha_1,\cdots,\alpha_{t-1}) \in \calA}|V(\alpha_1,\cdots,\alpha_{t-1})|$ by Proposition~\ref{key_prop}. 
Thus, $s(M_\bfc)$ is equal to this volume. 
Here, we also remark that in the first condition of $V(\alpha_1,\cdots,\alpha_{t-1})$, 
$w_0>w_{\sum_{q=i+1}^tr_q + 1}$ never happens since $w_0=0$. 
Therefore, $V(\alpha_1,\cdots,\alpha_{t-1})$ is equal to $U(\alpha_1,\cdots,\alpha_{t-1})$. 
\end{proof}

\section{Some formulas of generalized $F$-signatures}
\label{sec_Fsig_formula}

In this section, for some Segre products of polynomial rings, we have the formula of the generalized $F$-signatures using 
the idea given in the previous section (see Proposition~\ref{compute_Fsig1} and \ref{compute_Fsig2} below). 

\subsection{The Segre product of the polynomial rings with two variables}
We first consider the Segre product of the polynomial rings with two variables, i.e., 
$$S(t)\coloneqq\kk[x_1,y_1] \# \cdots \# \kk[x_t,y_t].$$ 
Using Example~\ref{segre} for the case $r_1=\cdots=r_t=1$, we see that the conic divisorial ideals of $S(t)$ 
one-to-one correspond to the points contained in 
\begin{align*}\calC(t)\coloneqq\big\{(c_1,\cdots,c_{t-1}) \in \Cl(S(t))\cong\ZZ^{t-1} \mid |c_i | \leq 1 \text{ for }1 \leq i \leq t-1, 
|c_i-c_j| \leq 1 \text{ for } 1 \leq i<j \leq t-1\big\}. \end{align*}
Let $s(c_1,\cdots,c_{t-1})$ denote the generalized $F$-signature of 
the conic divisorial ideal corresponding to $(c_1,\cdots,c_{t-1}) \in \calC(t)$. 

\begin{proposition}
\label{compute_Fsig1}
We have \begin{align*}
&s(\underbrace{0,\cdots,0}_p,\underbrace{1,\cdots,1}_q)=s(\underbrace{0,\cdots,0}_p,\underbrace{-1,\cdots,-1}_q) 
=\frac{1}{\binom{t}{q}(t+1)} \\
&\quad\text{ {\em for any $p \geq 0$ and $q > 0$ with $p+q=t-1$ and any permutation}}, \\
&s(0,\cdots,0)=s(S(t))=\frac{2}{t+1}. 
\end{align*}
\end{proposition}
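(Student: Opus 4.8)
The plan is to apply Theorem~\ref{S_n} in the special case $r_1=\cdots=r_t=1$, where $d=t+1$ and the index set $\calA$ appearing in the proof of Theorem~\ref{S_n} becomes $\{0,1\}^{t-1}$, and then to translate the descent conditions (i), (ii) into explicit constraints on a permutation $w=w_1\cdots w_{t+1}\in\Sf_{t+1}$. Before counting I would dispose of the ``for any permutation'' clause by a symmetry argument. Permuting the first $t-1$ tensor factors of $S(t)=\kk[x_1,y_1]\#\cdots\#\kk[x_t,y_t]$ is a graded $\kk$-algebra automorphism that commutes with the Frobenius morphism; hence it carries $S(t)^{1/p^e}$ to itself and sends the conic ideal $M_\bfc$ to $M_{\pi\bfc}$, where $\pi\in\Sf_{t-1}$ acts by permuting the coordinates $c_1,\ldots,c_{t-1}$. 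Since such an automorphism preserves the multiplicities in the indecomposable decomposition of $S(t)^{1/p^e}$, we obtain $s(M_\bfc)=s(M_{\pi\bfc})$. Thus $s(\underbrace{0,\ldots,0}_p,\underbrace{1,\ldots,1}_q)$ does not depend on the arrangement of the $p$ zeros and $q$ ones, and it suffices to compute it for the displayed ordering.

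For the main count I would first analyze condition (i). Writing $\delta_j=1$ if $j$ is a descent of $w$ and $\delta_j=0$ otherwise, condition (i) expresses each $\alpha_i$ as a fixed combination of the $\delta_j$ with $j\le t-1$, shifted by the differences $c_i-c_{i+1}$. Consequently, for a given $w$ the tuple $\alpha$ is uniquely determined, and the requirement $\alpha\in\{0,1\}^{t-1}$ forces precisely two entries to be prescribed: position $1$ must be a descent and position $t-p$ must be an ascent. Because each admissible $w$ singles out a unique $\alpha$, the sets $U(\alpha)$ are pairwise disjoint, so the sum $\sum_\alpha|U(\alpha)|$ in Theorem~\ref{S_n} equals the cardinality of the single set $U$ of all $w$ satisfying conditions (i) and (ii) for their associated $\alpha$.

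Next I would simplify condition (ii). After telescoping the partial sums $P_{i-1}=\sum_{\ell=1}^{i-1}\alpha_\ell$ against the suffix descent counts, the quantity $\des(w_{t-i+1}\cdots w_d)-P_{i-1}$ collapses to $\delta_t+c_1-c_i$, independent of the internal structure of $w$. Forcing this to lie in $\{0,1\}$ yields $\delta_t=1$ (position $t$ is a descent), and the accompanying tie-break inequalities comparing $w_{t-i+1}$ with $w_d$ then determine, position by position, whether each entry lies above or below $w_d$: exactly $q$ of $w_1,\ldots,w_t$ must be smaller than $w_d$ and $p+1$ larger. This forces $w_d=q+1$ and places the values $\{1,\ldots,q\}$ and $\{q+2,\ldots,t+1\}$ into two prescribed blocks of positions. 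A short check shows the two constraints coming from condition (i) (descent at $1$, ascent at $t-p$) are already implied by this sign pattern, so no relations survive inside either block; counting the internal orders gives $|U|=q!\,(p+1)!$, whence $s(M_\bfc)=q!(p+1)!/(t+1)!=1/\big(\binom{t}{q}(t+1)\big)$ after using $t-q=p+1$. The case $\bfc=0$, which is the $F$-signature $s(S(t))$, runs identically: condition (ii) there collapses to $w_d\in\{1,t+1\}$, giving $|U|=2\cdot t!$ and hence $s(S(t))=2/(t+1)$.

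The step I expect to be the main obstacle is the bookkeeping inside condition (ii): carrying out the telescoping of $P_{i-1}$ against the suffix descent counts so that everything reduces to the single expression $\delta_t+c_1-c_i$, and then verifying that the resulting family of tie-break inequalities is consistent and actually forces the unique value $w_d=q+1$ while subsuming the two forced positions coming from condition (i), so that the final count is cleanly $q!(p+1)!$ with no residual internal descent conditions. I would also treat the boundary case $p=0$ (so $\bfc=(1,\ldots,1)$), where the ``ascent at $t-p$'' constraint degenerates, separately but by the same reduction.
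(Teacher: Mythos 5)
Your core computation for $\bfc=(\underbrace{0,\ldots,0}_p,\underbrace{1,\ldots,1}_q)$ and for $\bfc=(0,\ldots,0)$ is correct, and it is essentially the paper's argument in streamlined form: you solve condition (i) of Theorem~\ref{S_n} for the $\alpha_i$ in terms of the descent indicators $\delta_j$ (which gives the disjointness of the sets $U(\alpha)$), telescope in condition (ii) to get $\des(w_{t+1-i}\cdots w_{t+1})-\sum_{\ell=1}^{i-1}\alpha_\ell=\delta_t+c_1-c_i$, and conclude that the admissible permutations are exactly those with $q$ prescribed positions below $w_{t+1}=q+1$ and $p+1$ above, giving $q!\,(p+1)!$ of them. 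This is precisely the paper's set $\calU_p$, which the paper instead obtains by a two-way inclusion with explicit case analysis (its third and fourth steps); your forced conditions ``descent at $1$, ascent at $t-p$, descent at $t$'' match the paper's $\des(w_1w_2)=\alpha_{t-1}+1$, $\des(w_{t-p}w_{t+1-p})=0$ and $w_t>w_{t+1}$, and your treatment of the boundary cases $p=0$ and $\bfc=0$ is also sound.

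The genuine gap is the value $s(\underbrace{0,\ldots,0}_p,\underbrace{-1,\ldots,-1}_q)$, which is part of the statement and which your proposal never establishes. Your symmetry argument only permutes the coordinates $c_1,\ldots,c_{t-1}$ (by permuting the first $t-1$ tensor factors), so it can never change signs, and your descent count is carried out only for entries $c_i\in\{0,1\}$; nothing in the proposal connects $(0,\ldots,0,-1,\ldots,-1)$ to $(0,\ldots,0,1,\ldots,1)$. The paper closes this in one line: since the poset of $S(t)$ is pure, $S(t)$ is Gorenstein, hence $M_{-\bfc}\cong M_\bfc^\vee$ and $s(M_\bfc)=s(M_{-\bfc})$ by Proposition~\ref{prop_genFsig}(c). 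Alternatively, your own telescoping adapts verbatim to $c_i\in\{-1,0\}$: condition (i) then forces an ascent at position $1$ and a descent at position $t-p$, condition (ii) forces $\delta_t=0$, and the two blocks swap roles (now $q$ prescribed entries lie above $w_{t+1}=p+2$ and $p+1$ below), again yielding $q!\,(p+1)!$. Either repair is short, but as written the proposal proves only two of the three asserted equalities.
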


\begin{proof}
\noindent{\bf (The first step)}: 
First of all, by symmetry, we may only consider conic divisorial ideals represented as 
$(\underbrace{0,\cdots,0}_p,\underbrace{1,\cdots,1}_q)$ or $(\underbrace{0,\cdots,0}_p,\underbrace{-1,\cdots,-1}_q)$ in $\calC(t)$.
Moreover, since $S(t)$ is Gorenstein, we see that $s(\underbrace{0,\cdots,0}_p,\underbrace{1,\cdots,1}_q)=s(\underbrace{0,\cdots,0}_p,\underbrace{-1,\cdots,-1}_q)$ by Proposition~\ref{prop_genFsig} (c). 

\medskip
\noindent{\bf (The second step)}: 
Now, we apply Theorem~\ref{S_n}. 
In our case, we have $r_1=\cdots=r_t=1$, and hence $d=t+1$. 
Thus, for $(c_1,\cdots,c_{t-1})=(\underbrace{0,\cdots,0}_p,\underbrace{1,\cdots,1}_q)$, we obtain that 
\begin{align*}
s(\underbrace{0,\cdots,0}_p,\underbrace{1,\cdots,1}_q)=\frac{1}{(t+1)!}\sum_{(\alpha_1,\cdots,\alpha_{t-1}) \in \{0,1\}^{t-1}}|U(\alpha_1,\cdots,\alpha_{t-1})|, 
\end{align*}
where $U(\alpha_1,\cdots,\alpha_{t-1})$ is the set of permutations $w=w_1 \cdots w_{t+1} \in \Sf_{t+1}$ satisfying 
\begin{itemize}
\item[(i)] $\displaystyle \des(w_1 \cdots w_{t+1-i})=c_i+\sum_{\ell=i}^{t-1} \alpha_\ell$ for each $i=1,\cdots,t-1$; 
\item[(ii)] $\displaystyle \des(w_{t+1-i} \cdots w_{t+1}) \in \left\{\sum_{\ell=1}^{i-1}\alpha_\ell, \sum_{\ell=1}^{i-1}\alpha_\ell+1\right\}$ for each $i=2,\cdots,t$, and 
\begin{itemize}
\item[*] if it is $\displaystyle \sum_{\ell=1}^{i-1}\alpha_\ell$ then $\displaystyle w_{t+1-i} < w_{t+1}$ holds, or 
\item[*] if it is $\displaystyle \sum_{\ell=1}^{i-1}\alpha_\ell+1$ then $\displaystyle w_{t+1-i} > w_{t+1}$ holds. 
\end{itemize}
\end{itemize}
Let 
\begin{align*}
\calU_p=\left\{w=w_1\cdots w_{t+1} \in \Sf_{t+1} \mid \underbrace{w_2,\cdots,w_{t-p}}_q<w_{t+1}=q+1<\underbrace{w_1,w_{t+1-p},w_{t+2-p},\cdots,w_t}_{t-q} \right\}
\end{align*}
for $1 \leq p \leq t-2$, and let 
\begin{align*}
\calU_0=&\left\{w=w_1\cdots w_{t+1} \in \Sf_{t+1} \mid w_2,\cdots,w_{t-1}, w_t<w_{t+1}=t<w_1=t+1 \right\}, \text{ and }\\
\calU_{t-1}=&\{w \in \Sf_{t+1} \mid w_1,\cdots,w_t<w_{t+1}=t+1\} \cup \{w \in \Sf_{t+1} \mid w_1,\cdots,w_t > w_{t+1}=1\}. 
\end{align*}

In what follows, we will prove that 
\begin{align*}
\bigcup_{(\alpha_1,\cdots,\alpha_{t-1}) \in \{0,1\}^{t-1}}U(\alpha_1,\cdots,\alpha_{t-1})=\calU_p \text{ for each }1 \leq p \leq t-1, 
\end{align*}
in which case, we also have that 
\begin{align*}
s(\underbrace{0,\cdots,0}_p,\underbrace{1,\cdots,1}_q)&=\frac{1}{(t+1)!}\sum_{(\alpha_1,\cdots,\alpha_{t-1}) \in \{0,1\}^{t-1}}|U(\alpha_1,\cdots,\alpha_{t-1})|=\frac{1}{(t+1)!}|\calU_p| \\
&=\begin{cases}
\frac{q!(t-q)!}{(t+1)!} \text{ if }p<t-1, \\
\frac{2t!}{(t+1)!} \text{ if }p=t-1, 
\end{cases}
\end{align*}
as required. 

\medskip

We will prove the inclusion $\subset$ in the third step and the other one in the fourth step. 

\medskip

\noindent{\bf (The third step)}: 
Let $w \in \bigcup_{(\alpha_1,\cdots,\alpha_{t-1}) \in \{0,1\}^{t-1}}U(\alpha_1,\cdots,\alpha_{t-1})$. 
Thus, there exists $(\alpha_1,\cdots,\alpha_{t-1})$ satisfying the conditions (i) and (ii) in the second step.
Since $(c_1,\cdots,c_{t-1})=(\underbrace{0,\cdots,0}_p,\underbrace{1,\cdots,1}_q)$, we have 
\begin{align}\label{des(i)}
\des(w_1\cdots w_{t+1-i})=\begin{cases}
\sum_{\ell=i}^{t-1} \alpha_\ell &\text{ for }i=1,\cdots,p, \\
\sum_{\ell=i}^{t-1} \alpha_\ell+1 &\text{ for }i=p+1,\cdots,t-1. 
\end{cases}
\end{align}

First, let $p<t-1$. Then we have $\des(w_1w_2)=\alpha_{t-1}+1$ by \eqref{des(i)}. 
For some $2 \leq i \leq p$, if $\des(w_{t+1-i} \cdots w_{t+1})=\sum_{\ell=1}^{i-1} \alpha_\ell$, then we have 
$$\des(w_1 \cdots w_{t+1})=\des(w_1 \cdots w_{t+1-i})+\des(w_{t+1-i} \cdots w_{t+1})=\sum_{\ell=i}^{t-1}\alpha_\ell+\sum_{\ell=1}^{i-1}\alpha_\ell=\sum_{\ell=1}^{t-1}\alpha_\ell,$$ 
but we also have $$\des(w_1 \cdots w_{t+1})=\des(w_1w_2)+\des(w_2 \cdots w_{t+1}) \geq \alpha_{t-1}+1+\sum_{\ell=1}^{t-2}\alpha_\ell=\sum_{\ell=1}^{t-1}\alpha_\ell+1,$$ 
and this is a contradiction. Hence, $\des(w_{t+1-i} \cdots w_{t+1})=\sum_{\ell=1}^{i-1} \alpha_\ell+1$, and 
\begin{align}\label{eq1}w_{t+1-i} > w_{t+1} \text{ for every }2 \leq i \leq p. \end{align}
Then we also obtain that $\des(w_1\cdots w_{t+1})=\sum_{\ell=1}^{t-1}\alpha_\ell+1$, and hence $w_1>w_{t+1}$. 
On the other hand, when $i=1$, we have that $\des(w_1 \cdots w_{t})=\sum_{\ell=1}^{t-1} \alpha_\ell$ by \eqref{des(i)}, thus we have that $w_t>w_{t+1}$. 

Similarly, for some $p+1 \leq i \leq t-1$, if $\des(w_{t+1-i} \cdots w_{t+1})=\sum_{\ell=1}^{i-1} \alpha_\ell+1$, then we have 
$$\des(w_1\cdots w_{t+1})=\des(w_1 \cdots w_{t+1-i})+\des(w_{t+1-i} \cdots w_{t+1})=\sum_{\ell=i}^{t-1}\alpha_\ell+1+\sum_{\ell=1}^{i-1}\alpha_\ell+1=\sum_{\ell=1}^{t-1}\alpha_\ell+2.$$ 
This contradicts condition (ii) of $U(\alpha_1,\cdots,\alpha_{t-1})$. 
Hence, $\des(w_{t+1-i} \cdots w_{t+1})=\sum_{\ell=1}^{i-1} \alpha_\ell$, and 
\begin{align}\label{eq2}w_{t+1-i} < w_{t+1} \text{ for every }p+1 \leq i \leq t-1. \end{align} 
Therefore, by \eqref{eq1} and \eqref{eq2} together with $w_1>w_{t+1}$ and $w_t>w_{t+1}$, we conclude that $w \in \calU_p$. 

\medskip

Let $p=t-1$ (i.e., $c_1=\cdots=c_{t-1}=0)$. 
Consider the case $w_t<w_{t+1}$. 
Then we have $\des(w_1\cdots w_tw_{t+1})=\des(w_1 \cdots w_t)=\sum_{\ell=1}^{t-1}\alpha_\ell$. 
Hence, we also see that $$\des(w_{t+1-i}\cdots w_{t+1})=\des(w_1 \cdots w_{t+1}) - \des(w_1 \cdots w_{t+1-i})=\sum_{\ell=1}^{t-1}\alpha_\ell-\sum_{\ell=i}^{t-1}\alpha_\ell=\sum_{\ell=1}^{i-1}\alpha_\ell.$$
Therefore, $w_{t+1-i}<w_{t+1}$ for any $2 \leq i \leq t$. 
Similarly, consider the case $w_t>w_{t+1}$. Then we have $\des(w_1\cdots w_tw_{t+1})=\des(w_1 \cdots w_t)+1=\sum_{\ell=1}^{t-1}\alpha_\ell+1$. 
Hence, we also see that $$\des(w_{t+1-i}\cdots w_{t+1})=\des(w_1 \cdots w_{t+1}) - \des(w_1 \cdots w_{t+1-i})=\sum_{\ell=1}^{t-1}\alpha_\ell+1-\sum_{\ell=i}^{t-1}\alpha_\ell=\sum_{\ell=1}^{i-1}\alpha_\ell+1.$$
Therefore, $w_{t+1-i}>w_{t+1}$ for any $2 \leq i \leq t$. 
Thus, $w \in \calU_{t-1}$. 

\medskip

\noindent{\bf (The fourth step)}: 
First, let $p<t-1$. Let $w=w_1\cdots w_{t+1} \in \calU_p$ and let 
\begin{align*}
\alpha_i=\begin{cases}
\des(w_{t-i}w_{t+1-i}) &\text{ if }i \neq p, t-1, \\
1 &\text{ if }i=p, \\
0 &\text{ if }i=t-1. 
\end{cases}
\end{align*}
Then $(\alpha_1,\cdots,\alpha_{t-1})\in \{0,1\}^{t-1}$. 
It follows that $\des(w_1w_2)=1$ and $\des(w_{t-p}w_{t+1-p})=0$. 
Now we can see that $$\des(w_1 \cdots w_{t+1-i})=c_i+\sum_{\ell=i}^{t-1}\alpha_\ell.$$ 
Hence, $w$ satisfies the condition (i). Similarly, we can also verify that $w$ satisfies the condition (ii). 
Therefore, $w \in \bigcup_{(\alpha_1,\cdots,\alpha_{t-1})\in \{0,1\}^{t-1}} U(\alpha_1,\cdots,\alpha_{t-1})$. 

Let $p=t-1$ and let $w=w_1\cdots w_{t+1} \in \calU_{t-1}$. In any cases $w_1,\cdots,w_t<w_{t+1}$ and $w_1,\cdots,w_t>w_{t+1}$, let 
$$\alpha_i=\des(w_{t-i}w_{t+1-i}) \text{ for }1 \leq i \leq t-1. $$
Then we can check that $w$ satisfies the conditions (i) (ii), 
thus $w \in \bigcup_{(\alpha_1,\cdots,\alpha_{t-1})\in \{0,1\}^{t-1}} U(\alpha_1,\cdots,\alpha_{t-1})$. 
\end{proof}

\subsection{The Segre product of two polynomial rings}

We then consider the Segre product of two polynomial rings, 
i.e., $S(r,s) \coloneqq \kk[x_1,\cdots,x_{r+1}] \# \kk[y_1,\cdots,y_{s+1}]$. 
(We are now considering the case with $t=2,r_1=r$ and $r_2=s$ in Example \ref{segre}.) 
By \eqref{segre_ineq}, we see that the conic divisorial ideals of $S(r,s)$ 
one-to-one correspond to the points contained in 
\begin{align*}\calC(r,s)\coloneqq\{c \in \Cl(S(r,s))\cong\ZZ \mid -s \leq c \leq r \}. \end{align*} 
Let $s(c)$ denote the generalized $F$-signature of the conic divisorial ideal corresponding to $c \in \calC(r,s)$. 
Let $d=r+s+1$, which coincides with $\dim S(r,s)=d$.

\begin{proposition}
\label{compute_Fsig2}
We have $s(c)=A_{c+s+1,d}/d!$, where $A_{c+s+1,d}$ is the Eulerian number. 
\end{proposition}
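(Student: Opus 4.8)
The plan is to sidestep the permutation count of Theorem~\ref{S_n} and instead identify the relevant cell directly as a hypersimplex, whose volume is recorded in \eqref{vol_hyp}. By Corollary~\ref{Thm_Fsig_volume_Hibi} we have $s(c)=\vol(F_\bfc)$, where $F_\bfc$ is the cell of \eqref{segre_cell}. Specializing to $t=2$, $r_1=r$, $r_2=s$, and $\bfc=(c)$, the index $i$ in the last line of \eqref{segre_cell} runs only over $i=1$, so there is a single inequality of ``$c_i$-type'' and no cross-relations among distinct coordinates occur. Explicitly, $F_\bfc$ is the set of $(y',y_{1,1},\dots,y_{1,r},y_{2,1},\dots,y_{2,s})\in\RR^d$ with every coordinate in $[-1,0]$ and
\begin{equation*}
c-1\le y'+\sum_{j=1}^{s}y_{2,j}-\sum_{j=1}^{r}y_{1,j}\le c.
\end{equation*}

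First I would normalize into the unit cube by the unimodular translation $A=y'+1$, $V_j=y_{2,j}+1$, $U_j=y_{1,j}+1$, giving $A,V_j,U_j\in[0,1]$. Next, to replace the subtracted block $-\sum_j U_j$ by an added one, I would apply the unimodular reflection $U_j\mapsto 1-U_j$ on each of the $r$ coordinates. A direct substitution then collapses the single defining inequality into a constraint on the total coordinate sum, transforming $F_\bfc$ into
\begin{equation*}
\Big\{x\in[0,1]^{d}\;\Big|\;c+s\le \textstyle\sum_{i=1}^{d}x_i\le c+s+1\Big\}.
\end{equation*}
Because translations and the reflections $U_j\mapsto 1-U_j$ are unimodular (hence volume- and lattice-preserving, and insensitive to the semi-open boundary), this does not change the volume.

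Via the standard unimodular identification $x_i=z_i-z_{i-1}$ with $z_0=0$, this polytope is precisely the hypersimplex $\tilde{\Delta}_{c+s+1,\,d+1}$ of \eqref{hypersimplex}. Applying \eqref{vol_hyp} with $(k,N)=(c+s+1,\,d+1)$ then gives
\begin{equation*}
s(c)=\vol(F_\bfc)=\vol\big(\tilde{\Delta}_{c+s+1,\,d+1}\big)=\frac{A_{c+s+1,\,d}}{d!},
\end{equation*}
as claimed. One could alternatively specialize Theorem~\ref{S_n} at $t=2$, but the direct route avoids the stratification over the auxiliary parameters $\alpha_\ell$.

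I expect the only delicate point to be the constant bookkeeping across the two substitutions. The translation contributes a constant $r-s-1$ to the linear form, while turning $-\sum_j U_j$ into $+\sum_j U_j$ via $U_j\mapsto 1-U_j$ contributes a further constant $-r$; these combine to $-s-1$, so the window $[c-1,c]$ on the linear form becomes exactly $[c+s,\,c+s+1]$ on the total sum. Landing on $c+s+1$ (rather than, say, $c+s-1$) is precisely what forces the correct Eulerian index, and I would verify this shift carefully. Conceptually, the reason a single Eulerian number appears here---rather than a sum as in the general $t$ case---is that for $t=2$ the cross-inequalities $-r_j\le z_i-z_j\le r_i$ of \eqref{segre_ineq} are vacuous, so the cell is a genuine hypersimplex.
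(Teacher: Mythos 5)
Your proposal is correct and takes essentially the same approach as the paper: the paper's proof likewise bypasses Theorem~\ref{S_n}, applying one explicit unimodular (telescoping) change of coordinates that identifies $F_c$ with the hypersimplex $\tilde{\Delta}_{c+s+1,d+1}$ and then invoking \eqref{vol_hyp}. Your factorization of that transformation into a translation, the coordinate flips $U_j\mapsto 1-U_j$, and the substitution $x_i=z_i-z_{i-1}$ is just a repackaging of the same change of variables, with the same constant bookkeeping leading to the window $[c+s,\,c+s+1]$.
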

Remark that similar discussions were already given in \cite[Theorem~5.8]{WY}, \cite[Example 7]{Sin}. 

\begin{proof}[Proof of Proposition \ref{compute_Fsig2}]
It suffices to consider the volume of 
$$F_c =\left\{(y_0,y_1,\cdots,y_{r+s}) \in \RR^{r+s+1} \mid -1 \leq y_i \leq 0 \text{ for }0 \leq i \leq r+s, \; 
c-1 \leq \sum_{i=0}^ry_i-\sum_{j=r+1}^{r+s}y_j \leq c \right\}$$ for each $-s \leq c \leq r$. 
We apply the following unimodular transformation: let $z_0=0$ and let 
\begin{align*}
y_0 &\mapsto z_1-z_0-1, \\ 
y_i &\mapsto z_{r+s+1-i}-z_{r+s+2-i} \;\text{ for }\; i=1,\cdots,r, \\
y_{r+j} &\mapsto z_{j+1}-z_j-1 \;\text{ for }\; j=1,\cdots,s. 
\end{align*}
Then we see that 
\begin{align*}
F_c \mapsto \{(z_1,\cdots,z_{r+s+1}) \in \RR^{r+s+1} \mid \; &0 \leq z_i-z_{i-1} \leq 1 \text{ for }i=1,\cdots,r+s+1, \\ 
&c+s \leq z_{r+s+1}-z_0 \leq c+s+1 \}. 
\end{align*}
This is nothing else but the hypersimplex $\tilde{\Delta}_{c+s+1,d+1}$. Therefore, we conclude the assertion by \eqref{vol_hyp}. 
\end{proof}

The unimodular transformation given in the above proof is slightly different from the one given in the proof of Theorem~\ref{S_n}. 

\begin{remark}
There is a close connection between the $F$-signature and the \emph{Hilbert-Kunz multiplicity}, which is also a numerical invariant defined for a commutative algebra in positive characteristic (see e.g., \cite{Hun}). 
The Hilbert-Kunz multiplicity of the Segre products of polynomial rings have been studied in several papers, e.g, \cite{EY,Wat,WY}. 
\end{remark}

\section{Remarks on generalized $F$-signatures for Hibi rings}
\label{sec_Fsig_remark}

In Section~\ref{sec_Fsig_Segre} and \ref{sec_Fsig_formula}, we discussed the generalized $F$-signatures 
for the Segre product of polynomial rings using a symmetric group. 
However, there is no obstruction to apply those arguments to any Hibi ring 
except for the condition imposed to elements in a symmetric group would be untidy. 
We here show the outline of the strategy for general Hibi rings. 

For a Hibi ring $\kk[P]$ of a poset $P$, let us consider $F_{\bf c}$ where ${\bf c} \in \calC(P)\cap\ZZ^{n-d}$. 
For the concrete description of $F_{\bf c}$, see the explanation in Observation~\ref{obs_fulldim_cell}. 
We note that when we fix a spanning tree $T$ of $\widehat{P}$ as a set of edges $\{e_1,\cdots,e_d\}$, 
$\calC(P)$ and $F_{\bf c}$ are determined via $T$, and the variables $y_1,\cdots,y_d$ appearing in the definition of $F_{\bf c}$ 
one-to-one correspond to the edges $e_1,\cdots,e_d$ in $T$. 

We sketch how to transform $F_{\bf c}$ into the polytope whose volume can be computed 
by Proposition~\ref{key_prop} such as done in the proof of Theorem~\ref{S_n}. 

\begin{itemize}
\item[(I)] First, we choose a Hamiltonian path $(e_{h_1},\cdots,e_{h_d})$ of $\widehat{P}$ if possible, 
where we call a path \textit{Hamiltonian} if it contains all vertices of $\widehat{P}$ and paths through each vertex exactly once. 
If $\widehat{P}$ does not contain a Hamiltonian path, then we add some new edges in $\widehat{P}$ and construct a Hamiltonian path. 
\item[(II)] Next, we send each $y_i$ as follows. Assume that $e_i=e_{h_j}$ for some $j$. 
Then, we send $y_i$ to $z_{j}-z_{j-1}-1$ (resp. $z_{j-1}-z_j$) if the direction of $e_{h_j}$ is the same as (resp. different from) 
that of $e_i$. If $e_i$ is not contained in the Hamiltonian path, then we send $y_i$ as follows. 
Let $e_i=\{p,p'\}$ with $p \prec p'$. 
By definition, we can take two edges $e_{h_{a_i}}, e_{h_{b_i}}$ such that 
$p$ (resp. $p'$) is an endpoint of $e_{h_{a_i}}$ (resp. $e_{h_{b_i}}$). 
If $e_{h_{a_i}}$ appears before (resp. after) $e_{h_{b_i}}$ in the Hamiltonian path, 
we send $y_i$ to $z_{b_i}-z_{a_i}-1$ (resp. $z_{b_i}-z_{a_i}$). 
\item[(III)] After the applications of (II), we apply the translation of $z_i$'s to make the resulting polytope 
the one contained in an alcoved polytope. 
\end{itemize} 

We now perform a computation of the generalized $F$-signature for a Hibi ring that is not the Segre product of polynomial rings as follows.

\begin{example}
Let the notation be the same as Example~\ref{ex_typeN}. 
Then, for each $\bfc=(c_1,c_2) \in\calC(P)\cap\ZZ^2=\{(0,0), \pm(1,0), \pm(0,1), \pm(1,1), \pm(2,1)\}$, 
$s(M_\bfc)$ coincides with the volume of the cell 
\begin{align*}
F_\bfc=\{(y_1,\cdots,y_5) \in \RR^5 \mid &-1 \leq y_i \leq 0 \text{ for }i=1,\cdots,5, \\
&c_1-1 \leq (y_1+y_2+y_3)-(y_4+y_5) \leq c_1, \; c_2-1 \leq y_1+y_2-y_4 \leq c_2\}. 
\end{align*}
Here, we see that the edges $e_3,e_2,e_1,e_4,e_5$ form a Hamiltonian path. 
Thus, apply the following unimodular transformation: let $z_0=0$ and let 
$$y_1 \mapsto z_2-z_3, \; y_2 \mapsto z_1-z_2, \; y_3 \mapsto z_0-z_1, \; y_4 \mapsto z_4-z_3-1, \; y_5 \mapsto z_5-z_4-1.$$
Then we see that 
\begin{align*}
F_\bfc \mapsto \{(z_1,\cdots,z_5) \in \RR^5 \mid &0 \leq z_i-z_{i-1} \leq 1 \text{ for }i=1,\cdots,5, \\
&2-c_1 \leq z_5-z_0 \leq 3-c_1, \; 1-c_2 \leq z_4-z_1 \leq 2-c_2\} \subset \tilde{\Delta}_{3-c_1. 6}. 
\end{align*}
Thus, by Proposition~\ref{key_prop}, we obtain that $s(M_\bfc)=\frac{|W|}{5!}$, where $W \subset \Sf_5$ is the set of permutations $w=w_1 \cdots w_5$ satisfying the following: 
\begin{itemize}
\item $\des(w)=2-c_1$; 
\item $\des(w_1w_2w_3w_4) \in \{1-c_2,2-c_2\}$, and if it is $1-c_2$ (resp. $2-c_2$) then $w_1<w_4$ (resp. $w_1>w_4$). 
\end{itemize}

By properly rephrasing these conditions, we see the following: 
\begin{align*}
s(M_{(0,0)})&=\frac{1}{120}\big(|\{w \in \Sf_5 \mid \des(w)=2 \text{ and }w_1<w_4>w_5\} \cup \{w \in \Sf_5 \mid \des(w)=2 \text{ and }w_1>w_4<w_5\}|\big); \\
s(M_{(1,0)})&=\frac{1}{120}|\{w \in \Sf_5 \mid \des(w)=1 \text{ and }w_1<w_4<w_5\}|; \\
s(M_{(0,1)})&=\frac{1}{120}|\{w \in \Sf_5 \mid \des(w_1 \cdots w_4)=1 \text{ and }w_1>w_4>w_5\}|; \\
s(M_{(1,1)})&=\frac{1}{120}\big(|\{w \in \Sf_5 \mid w_1<w_2<w_3<w_4>w_5\}|
+|\{w \in \Sf_5 \mid \des(w_1 \cdots w_4)=1 \text{ and }w_1>w_4<w_5\}|\big); \\
s(M_{(2,1)})&=\frac{1}{120}|\{w \in \Sf_5 \mid w_1<w_2<w_3<w_4<w_5\}|. 
\end{align*}
Those agree with Example~\ref{ex_typeN_conic}. In fact, for example, we can verify that 
\begin{align*}
&\{w \in \Sf_5 \mid \des(w)=1 \text{ and }w_1<w_4<w_5\} \\
=&\{14523, 13524, 13425, 12534, 15234, 12435, 14235, 13245, 25134, 24135, 21345, 23145, 31245\}, 
\end{align*}
and hence $s(M_{(1,0)})=\frac{13}{120}$. 
Moreover, for $\bfc \in \{(1,0),(0,1),(1,1),(2,1)\}$, 
we can observe that the set of permutations corresponding to $s(M_{-\bfc})$ is defined by all ``swapped'' inequalites in that of $s(M_\bfc)$. 
More precisely, for example, we can see that 
$$s(M_{(-1,0)})=\frac{1}{120}|\{w \in \Sf_5 \mid \des(w)=3 \text{ and }w_1>w_4>w_5\}|.$$ 
By the well-known bijection of $\Sf_d$ defined by $w_1 \cdots w_d \mapsto (d+1-w_1) \cdots (d+1-w_d)$, 
we can also get that $s(M_{-\bfc})=s(M_\bfc)$, which was already confirmed by Proposition~\ref{prop_genFsig}(c) in a general situation. 
We note that the corresponding set to $s(M_{(0,0)})$ is closed under this bijection. 
\end{example}

\begin{remark}
In the case of the poset corresponding to the Segre product of polynomial rings (Figure~\ref{segre_poset}), 
we see that there is no Hamiltonian path. Thus, we add new edges $e_i':=\{p_{i,1},p_{i+1,r_{i+1}}\}$ for $i=1,\cdots,t-1$. 
Then, we can choose a Hamiltonian path 
$$e_{t,0},e_{t,1},\cdots,e_{t,r_t-1},e_{t-1}',e_{t-1,1},\cdots,e_{t-1,r_{t-1}-1},e_{t-2}',\cdots,e_{1,r_1-1},e_{1,r_1}.$$ 
Then, the unimodular transformation \eqref{eq_Fsig1} (resp. \eqref{eq_Fsig_translation}) corresponds to 
the application of (II) (resp. (III)). 
\end{remark}

As a consequence of these arguments, we can see that for any $d$-dimensional Hibi ring the values of the generalized $F$-signatures of 
conic divisorial ideals can be described with the form $\Gamma(\mathfrak{S}_d)/d!$, 
where $\Gamma(\mathfrak{S}_d)$ is the number of elements in the symmetric group $\mathfrak{S}_d$ satisfying certain conditions. 

\subsection*{Acknowledgement} 
The authors would like to thank Kei-ichi Watanabe and Ken-ichi Yoshida for valuable discussions. 
The authors also thank the anonymous referee for valuable comments and suggestions.

The first author is supported by JSPS Grant-in-Aid for Young Scientists (B) 17K14177 
and JSPS Grant-in-Aid for Scientific Research (C) 20K03513. 
The second author is supported by World Premier International Research Center Initiative (WPI initiative), MEXT, Japan, 
JSPS Grant-in-Aid for Young Scientists (B) 17K14159, and JSPS Grant-in-Aid for Early-Career Scientists 20K14279. 


\end{document}